\theoremstyle{plain}
\newtheorem{thm}{Theorem}
\newtheorem{cor}{Corollary}
\newtheorem{lem}{Lemma}[section]
\theoremstyle{remark}
\newtheorem{rem}{Remark}
\numberwithin{equation}{section}
\title{ On the Rankin-Selberg problem: higher power moments of the Riesz mean error term }
\author{Yoshio Tanigawa(Nagoya), Wenguang Zhai and Deyu Zhang(Jinan)}
\date{}
\begin{document}

\maketitle

\begin{center}
{Science in China Series A {\bf 51}(1)(2008), 148-160. }
\end{center}

\begin{abstract}
Let $\Delta_1(x;\varphi)$ be the error term of the first Riesz means of the Rankin-Selberg
zeta function. We study the higher power moments of $\Delta_1(x;\varphi)$ and derive
an asymptotic formula for 3-rd, 4-th and 5-th power moments by using Ivi\'c\,'s large
value arguments.
\end{abstract}

\footnote[0]{2000 Mathematics Subject Classification: 11N37, 11F30.}
\footnote[0]{Key Words: The Rankin-Selberg problem, Riesz mean, Vorono\"i type formula, Power moment.}
\footnote[0]{This work is supported by National Natural Science Foundation of China(Grant No.
10301018) and   National Natural Science Foundation of Shandong Province(Grant No. 2006A31).}

\section{Introduction and  main results }
Let $\varphi(z)$ be a holomorphic form of weight $\kappa$ with respect to the full modular group
$SL_2({\Bbb Z})$, and denote by $a(n)$ the $n$-th Fourier coefficient of $\varphi(z).$ Suppose that
 $\varphi(z)$ is normalized such that $a(1)=1$ and $T(n)\varphi=a(n)\varphi$ for every $n\in {\Bbb N},$
where $T(n)$ is  the Hecke operator of order $n$. Let
$$
c_n=n^{1-\kappa}\sum_{m^2|n}m^{2(\kappa-1)}\left|a\left(\frac{n}{m^2}\right)\right|^2.
$$
Deligne's \cite{D} proved that  $|a(n)|\leq n^{(\kappa-1)/2}d(n),$
which  implies  $c_n\ll n^\varepsilon,$ where $d(n)$ is the Dirichlet divisor function.
The classical Rankin-Selberg problem is to study the upper bound of the error term
\begin{equation}
\Delta(x;\varphi):=\sum_{n\leq x}c_n-Cx,
\end{equation}
where $C$ is an explicit constant. Rankin \cite{R} proved that
\begin{equation}
\Delta(x;\varphi)=O(x^{\frac 35}),
\end{equation}
which was stated by Selberg \cite{S} again without proof. However, no improvement of (1.2) has been
obtained after Rankin and Selberg. Ivi\'c \cite{I1} proved that
$\Delta(x;\varphi)=\Omega_{\pm}(x^{\frac 38})$ and conjectured that
$\Delta(x;\varphi)=O(x^{\frac 38+\varepsilon})$.

In \cite{IMT}, Ivi\'c, Matsumoto and Tanigawa studied  the Riesz mean of the type
$$
D_{\rho}(x;\varphi):=\frac{1}{\Gamma(\rho+1)}\sum_{n\leq x}(x-n)^\rho c_n.
$$
Define the error term $\Delta_\rho(x;\varphi)$ by
\begin{equation}
D_\rho(x;\varphi)=\frac{\pi^2 \kappa
R_0}{6\Gamma(\rho+2)}x^{\rho+1}+\frac{Z(0)}{\Gamma(\rho+1)}x^\rho+\Delta_\rho(x;\varphi),
\end{equation}
where
\begin{align*}
R_0&=\frac{12(4\pi)^{\kappa-1}}{\Gamma(\kappa+1)}\iint_{\frak{F}}y^{\kappa-2}|\varphi(z)|^2dxdy,\\
Z(s)&=\sum_{n=1}^\infty c_n n^{-s},\ \ \Re s>1,
\end{align*}
the integral being taken over a fundamental domain $\frak{F}$ of
$SL_2({\Bbb Z})$ and $Z(s)$ can be continued to the whole plane.

Ivi\'c, Matsumoto and Tanigawa \cite{IMT} studied the relation between $\Delta(x;\varphi)$
and $\Delta_1(x;\varphi)$. They proved if $\Delta_1(x;\varphi)=O(x^\alpha)$ holds for some
$\alpha\geq 0$, then $\Delta(x;\varphi)=O(x^{\alpha/2})$. They also proved that
\begin{equation}
\Delta_1(x;\varphi)=O(x^{\frac 65})
\end{equation}
 and
\begin{equation}
\int_1^T\Delta_1^2(x;\varphi)dx=\frac{2}{13}(2\pi)^{-4}(\sum_{n=1}^\infty
c_n^2 n^{-7/4} )T^{13/4}+O(T^{3+\varepsilon}).
\end{equation}
Recently, Ivi\'c \cite{I2} studied the fourth power moment of $\Delta_1(x;\varphi)$ and  proved that
\begin{equation}
\int_1^T\Delta_1^4(x;\varphi)dx \ll T^{11/2+\varepsilon}.
\end{equation}
Note that (1.6) is the best possible up to $\varepsilon$. Both of
(1.5) and (1.6) support the following

\noindent {\bf Conjecture.} The pointwise estimate
\begin{equation}
\Delta_1(x;\varphi) \ll x^{9/8+\varepsilon}
\end{equation}
holds.

In this paper we shall prove that for the fourth-power moment of $\Delta_1(x;\varphi)$,
we can get an asymptotic formula, which substantially improves Ivi\'c\,'s estimate (1.6).
More generally, we shall establish the asymptotic formula for the
$k$-th power moment of $\Delta_1(x;\varphi)$ with $k=3,4,5.$  We first prove the
following Theorem 1, which is an upper bound result.

\begin{thm} Suppose $A\geq 0$ is a fixed constant,  then we
have
\begin{equation}
\int_T^{2T}|\Delta_1(x;\varphi)|^Adx
\ll\left\{\begin{array}{ll} T^{1+ 9A/8}\log^{39} T,&\mbox{if $0\leq A\leq 16/3 ,$}\\[1ex]
                            T^{(3+6A)/5 }\log^{5} T,& \mbox{if $A>16/3.$}
          \end{array}\right.
\end{equation}
\end{thm}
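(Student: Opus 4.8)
The plan is to derive Theorem 1 from Ivi\'c's large-value method, the only structural input being the truncated Vorono\"i-type formula for $\Delta_1(x;\varphi)$. Because $Z(s)=\zeta(s)L(s,\mathrm{sym}^2\varphi)$ has degree $4$, this formula has the shape
\[
\Delta_1(x;\varphi)=B\,x^{9/8}\sum_{n\le N}\frac{c_n}{n^{7/8}}\cos\!\Big(8\pi (nx)^{1/4}+\frac{\pi}{4}\Big)+O\big(x^{\beta}+x^{9/8}N^{-\delta}\big)
\]
for a constant $B$, some $\beta<9/8$, some $\delta>0$ and $T\le x\le 2T$; the exponents $9/8$ and $7/8$ are forced by the second moment (1.5) (computing the diagonal in the mean square of the main term reproduces the right side of (1.5) precisely when $B=(2\pi)^{-2}$). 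Choosing $N$ a sufficiently large fixed power of $T$ renders the error a fixed power of $T$ below $T^{9/8}$, hence negligible in every moment considered: the term $x^{\beta}$ contributes $\ll T^{1+\beta A}$, which is below both $T^{1+9A/8}$ for $A>0$ and $T^{(3+6A)/5}$ for $A>16/3$.

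Let $\mu(V)$ denote the measure of $\{x\in[T,2T]:|\Delta_1(x;\varphi)|\ge V\}$. Four facts control it: the trivial $\mu(V)\ll T$; the second-moment bound $\mu(V)\ll T^{13/4}V^{-2}$ from (1.5); the vanishing $\mu(V)=0$ once $V\gg T^{6/5}$ from the pointwise bound (1.4); and, the heart of the matter, a large-value estimate
\[
\mu(V)\ll T^{7+\varepsilon}V^{-16/3}\qquad\text{for}\quad T^{9/8}\log^{B_0}T\le V\ll T^{6/5}.
\]
Granting this, split $\int_T^{2T}|\Delta_1(x;\varphi)|^A\,dx$ at $W=T^{9/8}\log^{B_0}T$. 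The part where $|\Delta_1|\le W$ is $\ll W^{A-2}\int_T^{2T}|\Delta_1|^2dx\ll T^{1+9A/8}\log^{O(1)}T$ when $A\ge2$ (and $\ll(\int|\Delta_1|^2)^{A/2}T^{1-A/2}\ll T^{1+9A/8}$ by H\"older when $0\le A\le2$). The part where $|\Delta_1|>W$ equals $W^A\mu(W)+A\int_W^{cT^{6/5}}V^{A-1}\mu(V)\,dV\ll T^{7+\varepsilon}\big(W^{A-16/3}+\int_W^{cT^{6/5}}V^{A-1-16/3}\,dV\big)$. Since $(9/8)(16/3)=6$, the lower endpoint contributes $T^{7}W^{A-16/3}=T^{1+9A/8}$; for $A<16/3$ it governs the integral, while for $A>16/3$ the integral is governed by its upper endpoint, contributing $T^{7}(T^{6/5})^{A-16/3}=T^{(3+6A)/5}$. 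Thus $A=16/3$ — the exponent in the large-value bound — is exactly the threshold in Theorem 1, and a careful accounting of logarithmic losses (which enter through $\sum_{n\le y}c_n^2\ll y\log^{O(1)}y$ and through $W$) yields the stated $\log^{39}T$ and $\log^{5}T$.

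To prove the large-value estimate, fix $V$ in that range and a maximal $1$-separated subset $x_1<\dots<x_R$ of the level set, so $\mu(V)\ll R$ by maximality; the bound $\Delta_1'(x;\varphi)\ll x^{3/5}$ from (1.2) lets one pass between discrete and integral means at the $x_r$ when $V\gg T^{3/5}$. Fix a parameter $M$ and split the Vorono\"i sum at $M$. The tail obeys
\[
\int_T^{2T}\Big|\Delta_1(x;\varphi)-B\,x^{9/8}\sum_{n\le M}\frac{c_n}{n^{7/8}}\cos\!\Big(8\pi(nx)^{1/4}+\frac{\pi}{4}\Big)\Big|^2dx\ll T^{13/4}M^{-3/4}\log^{O(1)}T,
\]
its diagonal being $\ll T^{13/4}\sum_{n>M}c_n^2n^{-7/4}$ and its off-diagonal exponential integrals being smaller by a first-derivative estimate; hence at most $\ll V^{-2}T^{13/4}M^{-3/4}\log^{O(1)}T$ of the $x_r$ are tail-dominated, and for the remainder the short sum $H(x_r):=\sum_{n\le M}c_nn^{-7/8}e^{8\pi i(nx_r)^{1/4}}$ satisfies $|H(x_r)|\gg VT^{-9/8}$. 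Passing to $u=x^{1/4}$ makes $H$ a (nonlinear) Dirichlet polynomial with frequencies $4n^{1/4}$, $n\le M$, on an interval of length $\asymp T^{1/4}$, and the Hal\'asz--Montgomery large-values inequality bounds the number of such $x_r$ by $T^{\varepsilon}(V^{-2}T^{9/4}+M^{\gamma}T^{\eta}V^{-4})$ for suitable $\gamma,\eta$; choosing $M$ to balance the tail contribution against this second term gives $R\ll T^{7+\varepsilon}V^{-16/3}$ throughout the range of $V$.

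The main obstacle is that last step: a clean, sharp large-values bound for $\sum_{n\le M}c_nn^{-7/8}e^{8\pi i(nx)^{1/4}}$. It requires quantitative control of how the frequencies $\{4n^{1/4}:n\le M\}$ are spaced relative to the resolution $T^{-1/4}$ (so that enough genuinely separated points can be extracted for Hal\'asz--Montgomery), together with a bound for the additive-energy count $\#\{n_i\le M:|n_1^{1/4}+n_2^{1/4}-n_3^{1/4}-n_4^{1/4}|<\delta\}$ that governs the bilinear term; both are handled by first- and second-derivative (van der Corput) estimates for exponential sums and the divisor-type bound $c_n\ll n^{\varepsilon}$, and it is precisely the optimisation of $M$ against $V$ here that produces the exponent $16/3$ and the critical value of $A$.
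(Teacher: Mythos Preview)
Your framework is the right one and matches the paper: truncated Vorono\"i formula plus Hal\'asz--Montgomery large values. The gap is that the large-value estimate, which is the entire content of the theorem, is asserted rather than proved. You claim $\mu(V)\ll T^{7+\varepsilon}V^{-16/3}$ and say it follows by ``choosing $M$ to balance'' a tail term against a Hal\'asz--Montgomery term $M^{\gamma}T^{\eta}V^{-4}$ with $\gamma,\eta$ left unspecified; but those exponents are exactly what has to be computed, and a single-term bound of that shape is not what the method produces. In the paper's argument the bilinear term in Hal\'asz--Montgomery is the exponential sum $\sum_{n\sim N_0}e\big(4n^{1/4}(x_{l_1}^{1/4}-x_{l_2}^{1/4})\big)$, which is bounded via Kuzmin--Landau together with the specific exponent pair $(1/11,9/11)$; one then subdivides $[T,2T]$ into blocks of length $T_0=V^{22}T^{-2}{\cal L}^{-34}$ so that the off-diagonal piece can be absorbed. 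This yields a \emph{two-term} count $M\ll TV^{-3}{\cal L}^{5}+T^{4}V^{-25}{\cal L}^{39}$ for $V$-spaced points (paper's normalisation $|\Delta_1|\gg VT$), equivalently $\mu(V)\ll T^{3}V^{-2}{\cal L}^{5}+T^{28}V^{-24}{\cal L}^{39}$ in your normalisation; the threshold $A=16/3$ then emerges from comparing $T^{(3+6A)/5}$ (first term, evaluated at the pointwise ceiling $V=T^{6/5}$) with $T^{1+9A/8}$ (second term, evaluated at $V=T^{9/8}$). Your single exponent $16/3$ in $\mu(V)$ looks reverse-engineered from the answer rather than derived.

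Two further points. First, your reference to the additive-energy count $\#\{|n_1^{1/4}+n_2^{1/4}-n_3^{1/4}-n_4^{1/4}|<\delta\}$ is misplaced: in Hal\'asz--Montgomery the bilinear form $(\varphi_{l_1},\varphi_{l_2})$ is a \emph{single} sum over $n$, not a fourfold energy, so what is actually needed is a van der Corput / exponent-pair bound for $\sum_{n\sim N_0}e(4n^{1/4}u)$, and the choice of exponent pair matters. Second, the theorem records the explicit powers $\log^{39}T$ and $\log^{5}T$; these are not generic but come from the choice $T_0\asymp V^{22}T^{-2}{\cal L}^{-34}$ and the factor ${\cal L}^{4+\varepsilon}$ arising from dyadic splitting, Cauchy, and $\sum_{n\le x}c_n^2\ll x\log^{1+\varepsilon}x$. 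An argument that only tracks $T^{\varepsilon}$, as yours does, cannot recover them.
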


\begin{rem} From Theorem 1 we know that the estimate
\begin{equation}
\int_T^{2T}|\Delta_1(x;\varphi)|^{A_0}dx\ll T^{1+9A_0/8+\varepsilon}
\end{equation}
holds for $A_0=16/3.$ The value of $A_0$ for which (1.9) holds is closely related to the upper
bound estimate of $\Delta_1(x;\varphi).$ The exponent $16/3$ follows from the estimate (1.4).
\end{rem}

Before stating the asymptotic results, we introduce some notations. Suppose
$f:{\Bbb N}\rightarrow {\Bbb R}$ is any function such that $f(n)\ll n^\varepsilon,$ $k\geq 2$
is a fixed integer. Define
\begin{equation}
s_{k;l}(f):=\sum_{\sqrt[4]{n_1}+\cdots
+\sqrt[4]{n_l}=\sqrt[4]{n_{l+1}}+\cdots +\sqrt[4]{n_k }}
\frac{f(n_1)\cdots f(n_k)}{(n_1\cdots n_k)^{7/8}}\hspace{3mm}(1\leq
l<k),
\end{equation}
\begin{equation}
B_k(f):=\sum_{l=1}^{k-1}{k-1 \choose l}s_{k;l}(f)\cos\frac{\pi(k-2l)}{4}.
\end{equation}
We shall use $s_{k;l}(f)$  to denote both of the series (1.10) and
its value.

Suppose $A_0>3$ is a real number, define
\begin{align*}
&K_0: =\min\{n\in {\Bbb N}:n\geq A_0, 2|n\},\ \ b(k):=4^{k-2}+\frac{k-2}{8},\\
&\sigma(k,A_0): = \frac{3(A_0-k)}{4(A_0-2)}, \ \  3\leq k<A_0,  \\
&\delta_1(k,A_0): =\frac{\sigma(k,A_0)}{4b(K_0)},\ \
 \delta_2(k,A_0): =\frac{\sigma(k,A_0)}{4b(k)+4\sigma(k,A_0)}.
\end{align*}

\begin{thm} Let $A_0>5$ be a real number such that (1.9) holds, then for any integer
$3\leq k<A_0$, we have the asymptotic formula
\begin{equation}
\int_1^T\Delta_1^k(x;\varphi)dx=\frac{B_k(c)}{(8+9k)2^{3k-4}\pi^{2k}}T^{1+9k/8}
+O(T^{1+9k/8-\delta_1(k,A_0)+\varepsilon}).
\end{equation}
\end{thm}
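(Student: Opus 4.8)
The plan is to run a Vorono\"i/large-value argument in the spirit of Ivi\'c. I would start from the truncated Vorono\"i-type formula for $\Delta_1(x;\varphi)$ (as established in \cite{IMT}), which has the shape
\[
\Delta_1(x;\varphi)=\frac{x^{9/8}}{4\pi^2}\sum_{n\le N}\frac{c_n}{n^{7/8}}\cos\!\Bigl(8\pi(nx)^{1/4}+\tfrac{\pi}{4}\Bigr)+O\!\bigl(x^{1+\varepsilon}N^{-1/4}\bigr),\qquad 1\le N\le x^{C},
\]
introduce a parameter $y$ (a small power of $T$, fixed at the end), take $N$ a large fixed power of $T$, and write $\Delta_1(x;\varphi)=R_1(x)+R_2(x)$ with $R_1$ the partial sum over $n\le y$ and $R_2$ the remaining terms $y<n\le N$ together with the negligible error. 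Then
\[
\int_1^T\Delta_1^k(x;\varphi)\,dx=\int_1^TR_1^k(x)\,dx+\sum_{j=1}^k\binom{k}{j}\int_1^TR_1^{k-j}(x)R_2^j(x)\,dx ,
\]
where the first integral will supply the main term and the rest are error terms.

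To evaluate $\int_1^TR_1^k$, I would expand the $k$-th power, rewrite each product $\prod_{i=1}^k\cos\theta_i$ with $\theta_i=8\pi(n_ix)^{1/4}+\pi/4$ as $2^{1-k}\sum\cos(\theta_1\pm\theta_2\pm\cdots\pm\theta_k)$, and split according to whether $\beta:=\sqrt[4]{n_1}\pm\sqrt[4]{n_2}\pm\cdots\pm\sqrt[4]{n_k}$ is zero. On the diagonal $\beta=0$ the summand is $x^{9k/8}\cos\frac{\pi(k-2l)}{4}$ ($l$ = number of plus signs), $\int_1^Tx^{9k/8}dx=\frac{8}{8+9k}T^{1+9k/8}+O(1)$, and letting $y\to\infty$ in the arithmetic sum — legitimate because for $k\le5$ the series $s_{k;l}(c)$ converge, which one checks by counting solutions of $\sqrt[4]{n_1}+\cdots+\sqrt[4]{n_l}=\sqrt[4]{n_{l+1}}+\cdots+\sqrt[4]{n_k}$ in dyadic boxes — produces $\frac{B_k(c)}{(8+9k)2^{3k-4}\pi^{2k}}T^{1+9k/8}$ up to a tail error $O(T^{1+9k/8}y^{-\eta})$ with $\eta>0$ (the identification of the coefficient as $B_k(c)$ uses the symmetry $s_{k;l}=s_{k;k-l}$). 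On the off-diagonal $\beta\neq0$ the first-derivative test gives $\int_1^Tx^{9k/8}\cos(8\pi x^{1/4}\beta+\cdots)dx\ll T^{3/4+9k/8}|\beta|^{-1}$; summing this against the weights $(n_1\cdots n_k)^{-7/8+\varepsilon}$ and using $|\beta|\gg y^{-b(k)+\varepsilon}$ — which holds because a nonzero $\beta$ is an algebraic number of bounded degree whose conjugates are all $\ll y^{1/4}$, so $|\mathrm{Norm}(\beta)|\ge1$ forces $|\beta|$ not too small — bounds the off-diagonal by $\ll T^{3/4+9k/8}y^{b(k)+\varepsilon}$.

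For the cross terms $\int_1^T|R_1|^{k-j}|R_2|^j$ ($1\le j\le k$), I would use H\"older together with the mean values $\int_1^TR_2^2\,dx\ll T^{13/4}y^{-3/4+\varepsilon}$ (the tail Vorono\"i sum, diagonal plus off-diagonal) and $\int_1^T|R_2|^{A_0}\,dx\ll T^{1+9A_0/8+\varepsilon}$ (from (1.9) and $R_2=\Delta_1-R_1$); interpolation then gives $\int_1^T|R_2|^r\,dx\ll T^{1+9r/8+\varepsilon}y^{-\sigma(r,A_0)}$ for $2\le r\le A_0$. Since $K_0$ is even, $\int_1^TR_1^{K_0}=\int_1^T|R_1|^{K_0}$ can be evaluated exactly as above; provided $y$ is taken so small that $T^{3/4+9K_0/8}y^{b(K_0)}\ll T^{1+9K_0/8}$, this yields $\int_1^T|R_1|^{K_0}\ll T^{1+9K_0/8+\varepsilon}$ and hence $\int_1^T|R_1|^{r}\ll T^{1+9r/8+\varepsilon}$ for $r\le K_0$ by H\"older. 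Feeding these bounds into H\"older — it is exactly the hypothesis $k<A_0$ that keeps the conjugate exponents admissible — each cross term is $\ll T^{1+9k/8+\varepsilon}y^{-\sigma(k,A_0)}$.

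Collecting everything,
\[
\int_1^T\Delta_1^k(x;\varphi)\,dx=\frac{B_k(c)}{(8+9k)2^{3k-4}\pi^{2k}}T^{1+9k/8}+O\!\bigl(T^{1+9k/8+\varepsilon}y^{-\sigma(k,A_0)}+T^{3/4+9k/8}y^{b(k)+\varepsilon}\bigr),
\]
and choosing $y=T^{1/(4b(K_0))}$ — which saturates the constraint $T^{3/4+9K_0/8}y^{b(K_0)}\ll T^{1+9K_0/8}$, and for which the second error is $\ll T^{3/4+9k/8+b(k)/(4b(K_0))}=o\bigl(T^{1+9k/8-\delta_1(k,A_0)}\bigr)$ since $b(K_0)-b(k)\ge3\cdot4^{k-2}>\sigma(k,A_0)$ — gives the asserted error term $T^{1+9k/8-\sigma(k,A_0)/(4b(K_0))+\varepsilon}=T^{1+9k/8-\delta_1(k,A_0)+\varepsilon}$. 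The main obstacle is the off-diagonal treatment of $\int R_1^k$ and of $\int R_1^{K_0}$: one needs at once a workable lower bound for a nonzero $\sqrt[4]{n_1}\pm\cdots\pm\sqrt[4]{n_k}$ and control of how often it lies near $0$, and the exponential-in-$k$ quality of this spacing information is what forces $b(k)=4^{k-2}+(k-2)/8$ and hence the very small saving $\delta_1$; the same solution-counting underlies both the convergence of the $s_{k;l}(c)$ and the identification of the leading constant.
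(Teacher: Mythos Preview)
Your proposal is correct and follows essentially the same strategy as the paper: split $\Delta_1=\mathcal{R}_1+\mathcal{R}_2$ at a cutoff $y$, evaluate $\int\mathcal{R}_1^k$ by diagonal/off-diagonal analysis with the algebraic spacing bound (the paper's Lemma~2.3, giving $|\beta|\gg y^{-(4^{k-2}-1/4)}$, whence the off-diagonal contributes $T^{3/4+9k/8}y^{b(k)}$), interpolate $\int|\mathcal{R}_2|^r$ between $r=2$ and $r=A_0$, control $\int|\mathcal{R}_1|^{K_0}$ via the same expansion, and take $y=T^{1/(4b(K_0))}$. The one noteworthy difference is that the paper writes $(a+b)^k=a^k+ka^{k-1}b+O(|a|^{k-2}b^2+|b|^k)$ and treats the linear cross term $\int\mathcal{R}_1^{k-1}\mathcal{R}_2$ \emph{directly} by expanding the product and using oscillation (their Section~4.3), whereas you bound every cross term by H\"older; both routes give the dominant error $T^{1+9k/8+\varepsilon}y^{-\sigma(k,A_0)}$, so the endpoint is identical. (Two small slips to clean up: the convergence of $s_{k;l}(c)$ holds for all $k$, not just $k\le 5$, with tail $\ll y^{-3/4+\varepsilon}$; and the spacing bound is $|\beta|\gg y^{-(4^{k-2}-1/4)}$, the extra $y^{k/8}$ in $b(k)$ coming from the sum over $(n_1\cdots n_k)^{-7/8}$, not from $|\beta|^{-1}$ itself.)
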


\begin{rem}  From Theorem 1 we see that  $k\in \{3,4,5\},$ we
can get the asymptotic formula (1.12). Moreover, if the estimate
$\Delta_1(x;\varphi)\ll x^{19/16-\delta}$ holds for some small
$\delta>0,$ then the asymptotic formula (1.12) holds for $k =6.$
\end{rem}

The constant $\delta_1(k,A_0)$ is very small when $k=3,4,5.$ In
these three cases, we have the following Theorem 3.

\begin{thm} If $k=3,4,5,$ then
\begin{equation}
\int_1^T\Delta_1^k(x;\varphi)dx=\frac{B_k(c)}{(8+9k)2^{3k-4}\pi^{2k}}T^{1+9k/8}+O(T^{1+9k/8-\delta_2(k,16/3)+\varepsilon}).
\end{equation}
\end{thm}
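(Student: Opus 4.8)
\emph{Proof proposal.} The plan is to run, with $A_0=16/3$, the Vorono\"i-expansion-plus-large-value scheme behind Theorems~1 and 2, but to exploit that for $k\in\{3,4,5\}$ one has $k<16/3$, so that the power moment bound of Theorem~1 is available \emph{at the exponent $A=k$ itself}; this lets $b(k)$ replace $b(K_0)=b(6)$ in the error term and hence produces the sharper $\delta_2(k,16/3)$ in place of $\delta_1(k,16/3)$. The starting point is the truncated Vorono\"i-type formula underlying \cite{IMT} (the source of (1.4) and (1.5)): for a parameter $N$ that will be a small power of $T$ and for $x\asymp T$,
$$
\Delta_1(x;\varphi)=\Sigma_N(x)+R_N(x),\qquad
\Sigma_N(x)=\frac{x^{9/8}}{4\pi^{2}}\sum_{n\le N}\frac{c_n}{n^{7/8}}\cos\!\Bigl(8\pi\sqrt[4]{nx}+\frac{\pi}{4}\Bigr),
$$
where $\int_T^{2T}R_N^{2}(x)\,dx\ll T^{13/4}N^{-3/4+\varepsilon}$ (a refinement of (1.5)) and both $\Sigma_N$ and $R_N$ satisfy moment bounds of the strength of Theorem~1, namely $\int_T^{2T}|\Sigma_N(x)|^{A}\,dx\ll T^{1+9A/8+\varepsilon}$, and likewise for $R_N$, for $0\le A\le 16/3$.

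I would then write $\Delta_1^k=(\Sigma_N+R_N)^k=\Sigma_N^k+\sum_{j=1}^k\binom kj\Sigma_N^{k-j}R_N^{j}$ and, after a dyadic decomposition reducing $\int_1^T$ to $\int_T^{2T}$, estimate $\int_T^{2T}\Sigma_N^k$ and the remaining terms separately. The main term comes from $\int_T^{2T}\Sigma_N^k$: expanding the $k$-fold product and writing each $\prod_j\cos(8\pi\sqrt[4]{n_jx}+\tfrac{\pi}{4})$ as $2^{-(k-1)}$ times a sum of $2^{k-1}$ cosines of $\sum_j\varepsilon_j(8\pi\sqrt[4]{n_jx}+\tfrac{\pi}{4})$, $\varepsilon_j\in\{\pm1\}$, one isolates the diagonal terms, characterised by $\sum_j\varepsilon_j\sqrt[4]{n_j}=0$; with $l$ the number of plus signs this reads $\sqrt[4]{n_1}+\cdots+\sqrt[4]{n_l}=\sqrt[4]{n_{l+1}}+\cdots+\sqrt[4]{n_k}$, precisely the relation in (1.10). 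On such a term the residual phase collapses to $\cos\tfrac{\pi(k-2l)}{4}$ and $\int_T^{2T}x^{9k/8}\,dx$ supplies the factor $T^{1+9k/8}$; collecting the binomial weights and using $s_{k;l}(c)=s_{k;k-l}(c)$ to assemble the symmetric combination $B_k(c)$ of (1.11), the diagonal contributes
$$
\Bigl(\frac{1}{4\pi^{2}}\Bigr)^{k}\frac{2^{-(k-1)}}{1+9k/8}\,B_k(c)\,T^{1+9k/8}
=\frac{B_k(c)}{(8+9k)2^{3k-4}\pi^{2k}}\,T^{1+9k/8},
$$
which is the claimed main term (consistent with the $k=2$ case of (1.5)); along the way one checks that $s_{k;l}(c)$ converges absolutely and that truncating it at $n_j\le N$ costs only a power $N^{-\eta}$.

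For the error, the off-diagonal terms of $\int_T^{2T}\Sigma_N^k$ carry a nonzero oscillation $c\,x^{1/4}$, $c=8\pi\sum_j\varepsilon_j\sqrt[4]{n_j}$, so the first-derivative test bounds the $x$-integral by $T^{9k/8+3/4}|c|^{-1}$; inserting a lower bound for $|\sum_j\varepsilon_j\sqrt[4]{n_j}|$ (equivalently, a count of near-solutions of $\sum_j\varepsilon_j\sqrt[4]{n_j}\approx0$ with $n_j\le N$) and summing $\prod_j|c_{n_j}|n_j^{-7/8}\ll N^{\varepsilon}$ bounds the off-diagonal contribution by $T^{9k/8+3/4+\varepsilon}N^{b(k)}$ — the exponent $b(k)=4^{k-2}+\tfrac{k-2}{8}$ being what this degree-four Diophantine count yields. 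The remaining terms $\int_T^{2T}\Sigma_N^{k-j}R_N^{j}$ ($1\le j\le k$) are absorbed similarly: their diagonal parts (sparse sums over solutions of $\sum_i\pm\sqrt[4]{n_i}=0$ with a mixture of arguments $\le N$ and $>N$) are negligible, their off-diagonal parts are of the same form and so folded into the above, and for $j\ge2$ H\"older's inequality — using the $16/3$-moment for the $\Sigma_N$-factors (legitimate since $k\le5<16/3$) and interpolating the $R_N$-factor between its $L^2$ norm (which saves $N^{-3/8}$) and its $L^{16/3}$ norm, the interpolation exponent being such that the $j$-dependence cancels — gives a contribution $\ll T^{1+9k/8+\varepsilon}N^{-\sigma(k,16/3)}$. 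Choosing $N$ to balance $T^{-1/4}N^{b(k)}$ against $N^{-\sigma(k,16/3)}$, i.e. $N=T^{1/(4(b(k)+\sigma(k,16/3)))}$, renders every error term $\ll T^{1+9k/8-\delta_2(k,16/3)+\varepsilon}$, since $\delta_2(k,16/3)=\dfrac{\sigma(k,16/3)}{4(b(k)+\sigma(k,16/3))}$.

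I expect the crux — and the reason for the restriction $k\le5$ — to be the off-diagonal/Diophantine part: both the absolute convergence of $s_{k;l}(c)$ with a power-saving truncation and the quantitative lower bound for the nonzero values of $\varepsilon_1\sqrt[4]{n_1}+\cdots+\varepsilon_k\sqrt[4]{n_k}$ with $n_j\le N$ reduce to controlling the number of near-solutions of $\varepsilon_1\sqrt[4]{n_1}+\cdots+\varepsilon_k\sqrt[4]{n_k}\approx0$, and for $k=3,4,5$ this is exactly what yields the exponent $b(k)$; for $k\ge6$ no such unconditional count is known, which is why Theorem~2 only gives the weaker $\delta_1$ and why the extra hypothesis $\Delta_1(x;\varphi)\ll x^{19/16-\delta}$ is needed in Remark~2 to reach $k=6$.
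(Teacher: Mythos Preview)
Your plan is the paper's: split $\Delta_1=\mathcal{R}_1+\mathcal{R}_2$ via the truncated Vorono\"i formula (Lemma~2.1), expand the $k$-th power, extract the diagonal main term from $\int\mathcal{R}_1^k$ (Lemma~4.1), bound the off-diagonal via the first-derivative test together with Lemma~2.3 to get $T^{3/4+9k/8}y^{b(k)}$, bound the cross terms by H\"older interpolation between the $L^2$ estimate (4.8) and the $L^{16/3}$ moment to get $T^{1+9k/8+\varepsilon}y^{-\sigma(k,16/3)}$, and balance $y=T^{1/(4b(k)+4\sigma(k,16/3))}$. One point you assert but the paper has to justify separately: the bound $\int_T^{2T}|\mathcal{R}_1|^{16/3}\ll T^{1+9\cdot(16/3)/8+\varepsilon}$ is needed for $y$ well beyond the range $y\le T^{1/4b(K_0)}=T^{1/4b(6)}$ in which Lemma~4.1 supplies it, so the paper reruns the large-value argument of Theorem~4 directly on $\mathcal{R}_1$ to obtain (4.20) for all $y\le T^{1/3}$.

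Your closing paragraph misidentifies the obstruction at $k\ge 6$. The algebraic lower bound of Lemma~2.3 and the resulting exponent $b(k)=4^{k-2}+(k-2)/8$ are valid for every $k\ge 3$; there is no breakdown of the Diophantine input. The restriction $k\le 5$ comes entirely from the H\"older step: to control $\int|\mathcal{R}_1^{k-2}\mathcal{R}_2^2|$ as in (4.18) one needs $\int|\mathcal{R}_2|^{A_0}\ll T^{1+9A_0/8+\varepsilon}$ with some $A_0>k$, and unconditionally only $A_0=16/3$ is available (Theorem~1, cf.\ Remark~1). A pointwise bound $\Delta_1\ll x^{19/16-\delta}$ would push $A_0$ past $6$ --- hence Remark~2. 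Likewise, the reason Theorem~2 yields only $\delta_1$ is not a Diophantine limitation but that its proof sources the $\mathcal{R}_1$-moment from Lemma~4.1 at the even exponent $K_0$, forcing $y\le T^{1/4b(K_0)}$; Theorem~3 avoids this by using Theorem~4 directly. (Minor slip: $\sum_{n_j\le N}\prod_j c_{n_j}n_j^{-7/8}\ll N^{k/8+\varepsilon}$, not $N^{\varepsilon}$; the $N^{k/8}$ together with the factor $N^{4^{k-2}-1/4}$ from Lemma~2.3 is exactly what produces $N^{b(k)}$.)
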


\begin{rem} Note that
\begin{eqnarray*}
&&\delta_2(3,16/3)=7/248>1/36,\ \ \delta_2(4,16/3)=3/662>1/221,\\
&&\delta_2(5,16/3)=3/5192>1/1731.
\end{eqnarray*}
So we get the following corollary.
\end{rem}

\begin{cor} We have the following asymptotic formulas,
\begin{align*}
&\int_1^T\Delta_1^3(x;\varphi)dx=\frac{B_3(c)}{1120\pi^{6}}T^{35/8}+O(T^{35/8-1/36+\varepsilon}),\\
&\int_1^T\Delta_1^4(x;\varphi)dx=\frac{B_4(c)}{11264\pi^{8}}T^{11/2}+O(T^{11/2-1/221+\varepsilon}),\\
&\int_1^T\Delta_1^5(x;\varphi)dx=\frac{B_5(c)}{108544\pi^{10}}T^{53/8}+O(T^{53/8-1/1731+\varepsilon}).
\end{align*}
\end{cor}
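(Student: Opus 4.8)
The plan is to read off Corollary 1 directly from Theorem 3 by specializing $k$ to $3,4,5$ and putting its constants and exponents into closed form. Indeed, for each such $k$, Theorem 3 already gives $\int_1^T\Delta_1^k(x;\varphi)\,dx=\frac{B_k(c)}{(8+9k)2^{3k-4}\pi^{2k}}\,T^{1+9k/8}+O\bigl(T^{1+9k/8-\delta_2(k,16/3)+\varepsilon}\bigr)$, so only elementary arithmetic is left to do.

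First I would simplify the main term. For $k=3,4,5$ the coefficient denominator $(8+9k)2^{3k-4}$ equals $35\cdot2^5=1120$, $44\cdot2^8=11264$, $53\cdot2^{11}=108544$, respectively, while the exponent $1+9k/8$ equals $35/8$, $11/2$, $53/8$. Substituting these produces exactly the displayed main terms $\frac{B_3(c)}{1120\pi^6}T^{35/8}$, $\frac{B_4(c)}{11264\pi^8}T^{11/2}$, $\frac{B_5(c)}{108544\pi^{10}}T^{53/8}$.

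Next I would dispose of the error term. With $A_0=16/3$ one has $A_0-2=10/3$, hence $\sigma(k,16/3)=\frac{3(16-3k)}{40}$, while $b(k)=4^{k-2}+\frac{k-2}{8}$; feeding these into $\delta_2(k,16/3)=\sigma(k,16/3)/\bigl(4b(k)+4\sigma(k,16/3)\bigr)$ gives the values recorded in Remark 4, namely $7/248$ at $k=3$, $3/662$ at $k=4$, $3/5192$ at $k=5$. A one-line comparison of fractions shows $7/248>1/36$ (since $7\cdot36=252>248$), $3/662>1/221$ (since $3\cdot221=663>662$), and $3/5192>1/1731$ (since $3\cdot1731=5193>5192$). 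Therefore $T^{1+9k/8-\delta_2(k,16/3)+\varepsilon}\ll T^{35/8-1/36+\varepsilon}$ for $k=3$, and likewise $\ll T^{11/2-1/221+\varepsilon}$ and $\ll T^{53/8-1/1731+\varepsilon}$ for $k=4,5$; absorbing this weakening of the exponent into the $O$-constant yields the three asymptotic formulas of the corollary.

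As this shows, there is no genuine obstacle in the corollary itself: all the analytic substance — the Vorono\"i-type expansion of $\Delta_1(x;\varphi)$, Ivi\'c's large-value arguments, the moment bounds of Theorem 1, and the sharpening of the error exponent from $\delta_1$ to $\delta_2$ — is already carried out in Theorems 1--3. The only care needed is the elementary bookkeeping above, in particular verifying that each exact value $\delta_2(k,16/3)$ genuinely exceeds the rounded exponent $1/36$, $1/221$, $1/1731$ chosen for cosmetic simplicity in the statement.
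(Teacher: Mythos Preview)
Your proposal is correct and follows exactly the paper's own route: the corollary is deduced from Theorem~3 by substituting $k=3,4,5$, computing $(8+9k)2^{3k-4}$ and $1+9k/8$ explicitly, and then invoking the numerical inequalities $\delta_2(k,16/3)>1/36,\,1/221,\,1/1731$ already recorded in the remark preceding the corollary (Remark~3 in the paper's numbering, not Remark~4). No additional idea is needed, and your bookkeeping matches the paper's.
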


\section{\bf Preliminary Lemmas}

\begin{lem} Suppose $x>1$ is a  real number . For any $N\geq 1,$ define
\begin{equation}
 {\cal R}_1(x;N)=(2\pi)^{-2}x^{9/8}\sum_{n\leq
N}\frac{c_n}{n^{7/8}}\cos\left(8\pi\sqrt[4]{nx}-\frac \pi 4\right).
\end{equation}
If $1\ll N\ll x^2,$ then
\begin{equation}
\Delta_1(x;\varphi)={\cal
R}_1(x;N)+O(x^{1+\varepsilon}+x^{3/2+\varepsilon}N^{-1/2}).
\end{equation}
\end{lem}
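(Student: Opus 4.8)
The plan is to start from a truncated Voronoï-type formula for $\Delta_1(x;\varphi)$ with a long but finite range of summation, and then show that truncating that range down to $n \le N$ introduces only the claimed error. Such a Voronoï formula is already available in the work of Ivić–Matsumoto–Tanigawa: since $\Delta_1(x;\varphi)$ is the error term of the first Riesz mean attached to $Z(s)$, and $Z(s)$ has a functional equation of Rankin–Selberg type (essentially degree-four, reflecting the fourth-root spacing $\sqrt[4]{n}$ in (1.10)), the general theory of Riesz means for Dirichlet series with functional equations (Chandrasekharan–Narasimhan type results) yields an expansion whose main oscillatory term is exactly $\mathcal{R}_1(x;N)$ with $N$ replaced by, say, $x^2$, the exponent $9/8$ and phase $8\pi\sqrt[4]{nx} - \pi/4$ coming from a stationary-phase evaluation of the relevant Bessel-type kernel. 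So the first step is to quote or reconstruct this: for $N_0 \asymp x^2$ one has $\Delta_1(x;\varphi) = \mathcal{R}_1(x;N_0) + O(x^{1+\varepsilon})$, the tail beyond $x^2$ being negligible because the Bessel kernel decays and $c_n \ll n^\varepsilon$.

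The second and main step is the tail estimate
\[
\sum_{N < n \le x^2} \frac{c_n}{n^{7/8}} \cos\!\left(8\pi\sqrt[4]{nx} - \tfrac{\pi}{4}\right) \ll x^{3/8+\varepsilon} N^{-1/2},
\]
which, multiplied by $(2\pi)^{-2}x^{9/8}$, gives the stated $x^{3/2+\varepsilon}N^{-1/2}$. I would handle this by a dyadic decomposition: over a block $n \asymp M$ with $N \le M \le x^2$, apply partial summation to remove the smooth factor $n^{-7/8}$ and the amplitude $c_n$ (using $\sum_{n \le t} c_n \ll t$, i.e. the trivial bound from the main term of (1.1)), reducing matters to bounding exponential sums $\sum_{n \asymp M} e(4\sqrt[4]{nx})$. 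The phase $\phi(n) = 4(nx)^{1/4}$ has $\phi'(n) \asymp x^{1/4} M^{-3/4}$, which is small (indeed $\ll 1$ throughout the range $M \gg x^{1/3}$), so the first-derivative test / Kusmin–Landau inequality gives a bound of order $M \cdot x^{-1/4} M^{3/4} = x^{-1/4} M^{7/4}$; combined with the $M^{-7/8}$ from partial summation and $M^\varepsilon$ from $c_n$, a block contributes $\ll x^{-1/4} M^{7/8+\varepsilon}$. Hmm — this is increasing in $M$ and at $M = x^2$ gives $x^{3/2+\varepsilon}$, not quite matched to $N^{-1/2}$; so instead I would use the second-derivative test, $\phi''(n) \asymp x^{1/4}M^{-7/4}$, giving $\sum_{n\asymp M} e(\phi(n)) \ll M (x^{1/4}M^{-7/4})^{1/2} + (x^{1/4}M^{-7/4})^{-1/2} \ll x^{1/8}M^{1/8} + x^{-1/8}M^{7/8}$. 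Multiplying by $M^{-7/8+\varepsilon}$ and summing the dyadic blocks from $M = N$ up to $M \asymp x^2$: the first piece sums to $\ll x^{1/8}N^{-3/4+\varepsilon}$ and the second to $\ll x^{-1/8+\varepsilon}\log x$, and since $N \ll x^2$ both are $\ll x^{3/8+\varepsilon}N^{-1/2}$ after adjusting constants. (The case $N \gg x^2$ is excluded by hypothesis; the lower end $N \gg 1$ ensures no convergence issues.)

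The third step is simply to collect terms: $\Delta_1(x;\varphi) = \mathcal{R}_1(x;x^2) + O(x^{1+\varepsilon}) = \mathcal{R}_1(x;N) + O(x^{1+\varepsilon}) + O(x^{9/8} \cdot x^{3/8+\varepsilon}N^{-1/2})$, which is exactly (2.2). I expect the main obstacle to be bookkeeping in the exponential-sum step — namely getting the exponents in the two pieces of the van der Corput bound to line up so that, after multiplication by $x^{9/8}$ and summation over dyadic blocks up to $x^2$, the total collapses to the single clean term $x^{3/2+\varepsilon}N^{-1/2}$ with no spurious $x^{1+\varepsilon}\log$-type leftover exceeding what is already absorbed in the $O(x^{1+\varepsilon})$ term. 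A secondary technical point is justifying the passage from $\mathcal{R}_1(x;x^2)$ to the true $\Delta_1$ with only $O(x^{1+\varepsilon})$ error, for which I would lean directly on the Voronoï formula as developed in \cite{IMT} rather than redo the contour-integration and stationary-phase analysis from scratch.
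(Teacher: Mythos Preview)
The paper does not prove this lemma at all: it simply records that (2.2) is a special case of Lemma~2 of \cite{IMT}, which already gives the truncated Vorono\"{\i} formula for every $N$ with $1\ll N\ll x^2$. So there is nothing in the paper to compare your argument to beyond the observation that one may quote \cite{IMT} directly for general $N$ rather than only for the endpoint $N\asymp x^2$.

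Your Step~2, however, has a genuine gap. You write that partial summation lets you ``remove the smooth factor $n^{-7/8}$ and the amplitude $c_n$'' and reduce to the unweighted sum $\sum_{n\asymp M} e(4(nx)^{1/4})$. Partial summation does no such thing: the weights $c_n$ are arithmetic, not smooth or monotone, so Abel summation with $A(t)=\sum_{n\le t}c_n\ll t$ only converts the weighted sum into integrals of $A(t)$ against $\frac{d}{dt}\bigl(t^{-7/8}e(\phi(t))\bigr)$. Bounding $|A(t)|\ll t$ there discards all the oscillation and yields, over a block $n\asymp M$, only $\ll x^{1/4}M^{3/8}$ (the $\phi'$-term dominates the derivative), which summed dyadically up to $M\asymp x^2$ is $\ll x$ --- far too large. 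There is no legitimate step that turns $\sum_{n\asymp M} c_n\, n^{-7/8} e(\phi(n))$ into a constant times $\sum_{n\asymp M} e(\phi(n))$; the van der Corput bounds you then invoke apply to the latter sum, not to the former. To control the weighted tail one needs either to exploit the convolution structure of $c_n$ or, more naturally, the contour-integration argument of \cite{IMT}, which produces the error $O(x^{1+\varepsilon}+x^{3/2+\varepsilon}N^{-1/2})$ directly for every admissible $N$ rather than by truncating down from $N\asymp x^2$.
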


\begin{proof}(2.2) is a special case of Lemma 2 of \cite{IMT} .\end{proof}

\begin{lem} Let $S$ be an inner-product vector space over
${\Bbb C},$ $(a,b)$ denote the inner product in $S$ and $\Vert
a\Vert^2=(a,a).$ Suppose that $\xi,\varphi_1,\cdots,\varphi_R$ are
arbitrary vectors in $S$. Then
$$\sum_{l\leq R}|(\xi,\varphi_l)|^2\leq \Vert
\xi\Vert^2 \max_{l_1\leq R}\sum_{l_2\leq
R}|(\varphi_{l_1},\varphi_{l_2})|.$$
\end{lem}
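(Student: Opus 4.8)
The plan is to prove this by a straightforward duality argument, which is the standard route to the Halász–Montgomery inequality. First I would introduce the vector $\eta:=\sum_{l\leq R}b_l\varphi_l$ for an as-yet-unspecified choice of complex coefficients $b_l$. The natural choice, which makes the left-hand side appear, is to take $b_l=\overline{(\xi,\varphi_l)}$, so that $(\xi,\eta)=\sum_{l\leq R}|(\xi,\varphi_l)|^2$. Writing $S:=\sum_{l\leq R}|(\xi,\varphi_l)|^2$ for brevity, the Cauchy–Schwarz inequality in $S$ then gives $S=(\xi,\eta)\leq\Vert\xi\Vert\cdot\Vert\eta\Vert$, so $S^2\leq\Vert\xi\Vert^2\,\Vert\eta\Vert^2$.

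The second step is to expand $\Vert\eta\Vert^2$ and bound it by $S$ times the claimed maximum. We have
\[
\Vert\eta\Vert^2=\sum_{l_1\leq R}\sum_{l_2\leq R}b_{l_1}\overline{b_{l_2}}\,(\varphi_{l_1},\varphi_{l_2})
\leq\sum_{l_1\leq R}|b_{l_1}|\sum_{l_2\leq R}|b_{l_2}|\,|(\varphi_{l_1},\varphi_{l_2})|.
\]
Since $|b_l|=|(\xi,\varphi_l)|$, the inner sum over $l_2$ is bounded, uniformly in $l_1$, by
\[
\Bigl(\max_{l_1\leq R}\sum_{l_2\leq R}|(\varphi_{l_1},\varphi_{l_2})|\Bigr)\cdot\max_{l_2\leq R}|b_{l_2}|,
\]
which is a little lossy; cleaner is to keep the weights and use the elementary inequality $|b_{l_1}|\,|b_{l_2}|\leq\tfrac12(|b_{l_1}|^2+|b_{l_2}|^2)$, together with the symmetry $|(\varphi_{l_1},\varphi_{l_2})|=|(\varphi_{l_2},\varphi_{l_1})|$, to obtain
\[
\Vert\eta\Vert^2\leq\sum_{l_1\leq R}|b_{l_1}|^2\sum_{l_2\leq R}|(\varphi_{l_1},\varphi_{l_2})|
\leq S\cdot\max_{l_1\leq R}\sum_{l_2\leq R}|(\varphi_{l_1},\varphi_{l_2})|.
\]
Combining with $S^2\leq\Vert\xi\Vert^2\Vert\eta\Vert^2$ yields $S^2\leq\Vert\xi\Vert^2\,S\,\max_{l_1}\sum_{l_2}|(\varphi_{l_1},\varphi_{l_2})|$, and dividing by $S$ (the case $S=0$ being trivial) gives exactly the asserted bound.

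I do not expect any genuine obstacle here: the only mild subtlety is the symmetrization step that turns the double sum $\sum_{l_1,l_2}|b_{l_1}||b_{l_2}||(\varphi_{l_1},\varphi_{l_2})|$ into something controlled by $\sum_{l_1}|b_{l_1}|^2$ times the row-max, and that is handled by the AM–GM trick $|b_{l_1}||b_{l_2}|\le\frac12(|b_{l_1}|^2+|b_{l_2}|^2)$ plus relabeling. One should also remember to dispose of the degenerate case $S=0$ (equivalently $\eta=0$) separately before dividing, and to note that the argument uses only the inner-product axioms, so it applies verbatim in the $L^2$-type spaces where it will later be invoked with $\varphi_l$ taken to be Dirichlet polynomials or exponential sums.
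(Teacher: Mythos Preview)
Your argument is correct and is precisely the standard duality proof of the Hal\'asz--Montgomery inequality: choose $b_l=\overline{(\xi,\varphi_l)}$, apply Cauchy--Schwarz to $(\xi,\eta)$, expand $\Vert\eta\Vert^2$, and symmetrize via $|b_{l_1}||b_{l_2}|\le\tfrac12(|b_{l_1}|^2+|b_{l_2}|^2)$. The paper itself does not give a proof at all; it simply records the inequality as Lemma~2.2 and refers the reader to (A.30) of Ivi\'c's book \emph{The Riemann zeta-function}, where exactly this argument appears. One trivial remark: you have used the symbol $S$ both for the ambient inner-product space and for the scalar $\sum_{l\le R}|(\xi,\varphi_l)|^2$; renaming the latter (say to $\Sigma$) would avoid the clash.
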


\begin{proof} This is the well-known Halasz-Montgomery inequality. See
(A.30) of Ivi\'c \cite{I3}.\end{proof}

\begin{lem} Suppose $k\geq 3,$  $(i_1,\cdots, i_{k-1})\in \{0,1\}^{k-1}$ such that
$$
\sqrt[4]{n_1}+(-1)^{i_1}\sqrt[4]{n_2}+(-1)^{i_2}\sqrt[4]{n_3}+\cdots
+(-1)^{i_{k-1}}\sqrt[4]{n_k} \not= 0.
$$
Then we have
$$
|\sqrt[4]{n_1}+(-1)^{i_1}\sqrt[4]{n_2}+(-1)^{i_2}\sqrt[4]{n_3}+\cdots
+(-1)^{i_{k-1}}\sqrt[4]{n_k}| \gg \max(n_1,\cdots,
n_k)^{-(4^{k-2}-4^{-1}) }.
$$
\end{lem}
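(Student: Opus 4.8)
The plan is to prove Lemma 2.3 by induction on $k$, treating the quantity $L:=|\sqrt[4]{n_1}+(-1)^{i_1}\sqrt[4]{n_2}+\cdots+(-1)^{i_{k-1}}\sqrt[4]{n_k}|$ as a nonzero value of a suitable polynomial (or algebraic number) and bounding it from below by the standard device of multiplying through all Galois conjugates. Concretely, write $x_j=n_j^{1/4}$, set $M=\max_j n_j$, and observe that $L$ is a nonzero element of the field $\mathbb{Q}(x_1,\dots,x_k)$, a subfield of a compositum of the fields $\mathbb{Q}(n_j^{1/4})$; its conjugates over $\mathbb{Q}$ are obtained by independently replacing each $x_j$ by $\pm x_j$ and $\pm i x_j$. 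There are at most $4^k$ such conjugates (actually the relevant product can be organized to use far fewer), each of absolute value $\ll M^{1/4}$, and the product of $L$ with all the other conjugates is, up to sign, a nonzero rational number whose denominator is bounded (in fact the product is an algebraic integer times a bounded rational, hence $\geq$ some explicit constant in absolute value after clearing denominators).

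The cleanest route avoids a delicate conjugate count: fix $n_1$ (the term with coefficient $+1$) and consider the polynomial identity
\begin{equation*}
P(t)=\prod_{(\varepsilon_2,\dots,\varepsilon_k)}\left(t+\varepsilon_2 x_2+\cdots+\varepsilon_k x_k\right),
\end{equation*}
where each $\varepsilon_j$ ranges over the four fourth roots of unity. Then $P(t)$ is a polynomial in $t$ with coefficients that are symmetric in each $\{x_j,-x_j,ix_j,-ix_j\}$, hence polynomials in $x_j^4=n_j$ with integer coefficients; thus $P(t)\in\mathbb{Z}[t]$ once we also substitute — wait, $t$ itself we want to specialize to $x_1=n_1^{1/4}$, so instead form $Q=\prod P(\varepsilon_1 x_1)$ over the four choices of $\varepsilon_1$. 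The resulting quantity $Q$ is a symmetric function of all the $\{ \pm x_j, \pm i x_j\}$ and therefore lies in $\mathbb{Z}[n_1,\dots,n_k]$, so $Q$ is a (possibly zero) integer. One of the $4^{k}$ factors of $Q$ is exactly $\pm L$ or $\pm iL$ (the one picking out the given sign pattern — note $(-1)^{i}\in\{1,-1\}\subset\{$fourth roots of unity$\}$), and every factor has modulus $\leq k M^{1/4}$. Hence if $L\neq 0$ then $Q\neq 0$ (the factor equal to $\pm L$ is nonzero; but one must also ensure no *other* factor vanishes forces $Q=0$ — it does not, since $Q\neq0$ requires ALL factors nonzero, so this direction needs care: instead bound $|L|=|Q|/\prod_{\text{other factors}}|\cdot|$ only when $Q\neq0$, and handle $Q=0$ separately). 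Taking $|Q|\geq 1$ when $Q\neq 0$ gives $|L|\geq (kM^{1/4})^{-(4^{k}-1)}\gg M^{-4^{k}/4}$, which is weaker than the claimed exponent $4^{k-2}-4^{-1}$; so the exponent in the statement must come from a more economical grouping of conjugates.

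To reach the sharp exponent $4^{k-2}-1/4$ I would not conjugate over all fourth roots of unity but only over the sign changes $x_j\mapsto -x_j$ that are genuinely needed, proceeding by induction: rationalize one square root at a time rather than one fourth root at a time. Write $y_j=n_j^{1/2}$; first multiply $L$ by the conjugate obtained by flipping the sign of $x_1=n_1^{1/4}$ only, turning $L\cdot\bar L$ into an expression involving $\sqrt{n_1}$ and the other $x_j$'s with one fewer "independent $\pm$"; iterate. Each such step at worst squares the number of terms and multiplies the size bound by $M^{1/4}$, and after enough steps one lands in $\mathbb{Z}[\sqrt{n_1},\dots,\sqrt{n_k}]$ or $\mathbb{Z}[n_1,\dots,n_k]$; bookkeeping the exponents gives the geometric-series bound $4^{k-2}-1/4$ (the base case $k=2$, $|\sqrt[4]{n_1}-\sqrt[4]{n_2}|\gg M^{-3/4}$, being classical, although here $k\geq3$). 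The \textbf{main obstacle} is exactly this bookkeeping: organizing the iterated conjugation so that the number of auxiliary factors — and hence the exponent of $M$ — grows like $4^{k-2}$ rather than $4^{k}$, and simultaneously controlling the denominator (showing the final product, after clearing the bounded denominator coming from the leading coefficients, is a nonzero \emph{rational integer} so that its absolute value is $\geq1$). Once the exponent accounting is set up correctly, the size estimates on each conjugate ($\ll M^{1/4}$) and the "nonzero integer has absolute value $\geq1$" principle finish the proof immediately.
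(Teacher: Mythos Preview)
Your overall strategy is the right one and coincides with the paper's (which follows Tsang): realize
$\alpha=\sqrt[4]{n_1}+(-1)^{i_1}\sqrt[4]{n_2}+\cdots+(-1)^{i_{k-1}}\sqrt[4]{n_k}$
as a nonzero algebraic integer and bound it from below via the product of its conjugates. But what you have written is a plan, not a proof. Your $Q$-construction yields only the exponent $\sim 4^{k-1}$, and the inductive ``rationalize one root at a time'' scheme you then propose is left entirely to unspecified bookkeeping; you also correctly flag, but do not resolve, the possibility that some \emph{other} factor of $Q$ vanishes. The sharp exponent $4^{k-2}-\tfrac14$ does not come from a delicate induction; it comes from a single symmetry you have overlooked.

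The paper works directly in $K=\mathbb{Q}(\sqrt[4]{n_1},\dots,\sqrt[4]{n_k})$, of degree $h\leq 4^k$, and lets $\alpha=\alpha_1,\dots,\alpha_h$ be the full list of conjugates. Since $\alpha\neq 0$ is an algebraic integer, every $\alpha_j\neq 0$ and $|\alpha_1\cdots\alpha_h|\geq 1$; working with genuine Galois conjugates rather than a formal product over all $4^k$ sign choices is exactly what eliminates your ``$Q=0$'' worry. The key observation is that replacing \emph{every} $\sqrt[4]{n_j}$ simultaneously by $\zeta\sqrt[4]{n_j}$, for a fixed fourth root of unity $\zeta$, sends $\alpha$ to $\zeta\alpha$. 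Hence the conjugates split into $h/4$ blocks $\{\beta,-\beta,i\beta,-i\beta\}$, so
\[
\Bigl(\prod_{j\leq h/4}|\alpha_j|\Bigr)^{4}=\prod_{j\leq h}|\alpha_j|\geq 1,
\qquad\text{whence}\qquad
|\alpha|\geq \prod_{2\leq j\leq h/4}|\alpha_j|^{-1}\geq (kM^{1/4})^{-(h/4-1)}\gg M^{-(4^{k-2}-1/4)}.
\]
That one ``diagonal'' grouping is the entire saving from $4^{k-1}$ down to $4^{k-2}$, with no iterated sign-flipping required.
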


\begin{proof} We follow Tsang's approach \cite{T}. Consider the number field
${\Bbb Q}(\sqrt[4]{n_1},\cdots, \sqrt[4]{n_k}).$ Its degree $h$ is  one of the following numbers:
$1, 4, 4^2,\cdots, 4^k.$ If $h=1,$ it is trivial.
Suppose $h\geq 4.$ Let
$$
\alpha=\sqrt[4]{n_1}+(-1)^{i_1}\sqrt[4]{n_2}+(-1)^{i_2}\sqrt[4]{n_3}+\cdots
+(-1)^{i_{k-1}}\sqrt[4]{n_k}
$$
and $\alpha=\alpha_1, \alpha_2,
\alpha_3,\cdots, \alpha_h$ denotes all conjugates of $\alpha.$
Without loss of generality, suppose
\begin{align*}
\alpha_{\frac{h}{4}+j}&=-\alpha_j,\ \ 1\leq j\leq h/4; \\
\alpha_{\frac{h}{2}+j}&=\sqrt{-1}\alpha_j,\ \ 1\leq j\leq h/4,\\
\alpha_{\frac{3h}{4}+j}&=-\sqrt{-1}\alpha_j,\ \ 1\leq j\leq h/4.
\end{align*}
Since $\alpha_1\alpha_2\cdots \alpha_h$ is a nonzero integer, we
have $|\alpha_1\alpha_2\cdots \alpha_h|\geq 1$ and hence
$$
|\alpha_1\alpha_2\cdots \alpha_{h/4}|\geq 1,
$$
which combining the trivial estimate  $|\alpha_j|\leq k\max(n_1, \cdots, n_k)$ gives
\begin{align*}
|\alpha|&\geq \frac{1}{\prod_{j=2}^{h/4}|\alpha_j|}\geq
k^{1-h/4}\max(n_1,\cdots, n_k)^{-(h/16-1/4)}\\
&\geq k^{1-4^{k-1}}\max(n_1,\cdots, n_k)^{-(4^{k-2}-1/4)}
\end{align*}
if noting that $h\leq 4^k.$ This completes the proof of Lemma 2.3.
 \end{proof}

\begin{lem} Suppose $y>1.$ Define
\begin{equation*}
s_{k;l}(f;y):=\sum_{\stackrel{\sqrt[4]{n_1}+\cdots
+\sqrt[4]{n_l}=\sqrt[4]{n_{l+1}}+\cdots +\sqrt[4]{n_k }}{n_j\leq y,\
1\leq j\leq k}} \frac{f(n_1)\cdots f(n_k)}{(n_1\cdots
n_k)^{7/8}}\hspace{3mm}(1\leq l<k).
\end{equation*}
Then
$$|s_{k;l}(f;y)-s_{k;l}(f)|\ll y^{-3/4+\varepsilon}.$$
\end{lem}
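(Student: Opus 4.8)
The plan is to estimate the difference $s_{k;l}(f;y)-s_{k;l}(f)$ by bounding the "tail" of the multiple series (1.10): the terms in which at least one index $n_j$ exceeds $y$. Writing $M=\max(n_1,\dots,n_k)$, the tail splits according to which variable is the maximal one, so by symmetry it suffices to bound
\[
\sum_{\substack{\sqrt[4]{n_1}+\cdots+\sqrt[4]{n_l}=\sqrt[4]{n_{l+1}}+\cdots+\sqrt[4]{n_k}\\ n_1=M>y}}\frac{|f(n_1)\cdots f(n_k)|}{(n_1\cdots n_k)^{7/8}}
\ll \sum_{\substack{\sqrt[4]{n_1}+\cdots+\sqrt[4]{n_l}=\sqrt[4]{n_{l+1}}+\cdots+\sqrt[4]{n_k}\\ n_1=M>y}}\frac{M^{\varepsilon}}{(n_1\cdots n_k)^{7/8}},
\]
using $f(n)\ll n^{\varepsilon}$ (and similarly for the other $k-1$ choices of maximal variable).

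The key point is the diophantine constraint. Since $n_1=M$, the equation $\sqrt[4]{n_1}+\cdots+\sqrt[4]{n_l}=\sqrt[4]{n_{l+1}}+\cdots+\sqrt[4]{n_k}$ forces the right side to be $\asymp M^{1/4}$, hence at least one of $n_{l+1},\dots,n_k$ is also $\asymp M$; fix that index, and the remaining $k-2$ variables, say $n_2,\dots,n_{k-1}$ after relabelling, range freely over $[1,M]$ subject to one linear relation among the fourth roots, which pins down (to within $O(1)$ choices, by the monotonicity of $t\mapsto t^{1/4}$) the value of one of them once the others are chosen. So the number of tuples with $\max=M$ and all entries in a dyadic block is $\ll M^{k-3+\varepsilon}\cdot$(something); more usefully, I would just sum directly: summing over the $k-3$ free variables $n_j\le M$ gives $\prod n_j^{-7/8}\ll$ convergent, summing the two variables of size $\asymp M$ gives $M^{-7/4}$, and the determined variable is $\asymp M$ giving another $M^{-7/8}$, while the count of free choices costs at most $M^{\varepsilon}$. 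Altogether each dyadic piece $M\asymp 2^{\nu}$, $2^{\nu}>y$, contributes $\ll 2^{\nu(-7/4-7/8+(k-3)\cdot 0+\varepsilon)}\cdot 2^{\nu}$ after accounting for the range of the determined variable — one must be careful here to carry out the bookkeeping so that the exponent of $2^{\nu}$ comes out strictly negative, namely $-3/4+\varepsilon$, matching the claimed bound. Summing the geometric series over $\nu$ with $2^{\nu}>y$ then yields $\ll y^{-3/4+\varepsilon}$.

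The main obstacle is precisely this bookkeeping: organizing the free versus constrained variables so that the surviving powers of $M$ combine to $-3/4+\varepsilon$ rather than something non-negative. The safe way is to first relabel so that $n_1$ is the maximum and observe that among the ``large'' variables on each side the exponents $7/8$ per variable already give a power saving; the danger is overcounting the constrained variable's range. One clean device is to use the elementary inequality $|\sqrt[4]{a}-\sqrt[4]{b}|\gg |a-b|\,\max(a,b)^{-3/4}$, so that fixing all but one variable, the equation forces the last one into an interval of length $\ll M^{3/4}$ (hence $\ll M^{3/4}$ integer choices, but each such choice carries weight $\asymp M^{-7/8}$, net $M^{-1/8}$), which is what produces the ultimate saving; combined with the trivial convergent sum over the genuinely free variables and the $M^{\varepsilon}$ from $f$, the dyadic piece is $\ll M^{-3/4+\varepsilon}$ and summation over dyadic $M>y$ finishes the proof. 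Lemma 2.3 is not needed here (it is the $l$-dependent off-diagonal estimate used elsewhere); only the monotonicity and concavity of $t\mapsto t^{1/4}$ enter.
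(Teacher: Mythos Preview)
Your argument has a genuine gap. The key error is the repeated claim that the sum over the ``free'' small variables is bounded (``$\prod n_j^{-7/8}\ll$ convergent''). In fact $\sum_{n\le M} n^{-7/8}\asymp M^{1/8}$ diverges, so each free variable costs a factor $M^{1/8}$, not $O(1)$. If one carries out the naive bookkeeping correctly---$n_1\asymp M$ the maximum, one variable on the opposite side (say $n_k$) determined by the equation and necessarily $\asymp M$, and $n_2,\dots,n_{k-1}$ ranging over $[1,M]$---the dyadic block contributes
\[
\sum_{n_1\asymp M} n_1^{-7/8+\varepsilon}\cdot
\Bigl(\prod_{j=2}^{k-1}\sum_{n_j\le M}n_j^{-7/8+\varepsilon}\Bigr)\cdot M^{-7/8+\varepsilon}
\ \ll\ M^{1/8}\cdot M^{(k-2)/8}\cdot M^{-7/8}\cdot M^{\varepsilon}
= M^{(k-8)/8+\varepsilon},
\]
and summing over dyadic $M>y$ gives only $y^{(k-8)/8+\varepsilon}$. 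For $k=3,4,5$ this is $y^{-5/8},y^{-1/2},y^{-3/8}$ (up to $\varepsilon$), all strictly weaker than the stated $y^{-3/4+\varepsilon}$; for $k\ge 8$ it does not even establish convergence of $s_{k;l}(f)$. Your ``clean device'' at the end does not repair this: once all but one variable is fixed, the equation pins down the last one \emph{exactly} (at most one integer), not merely to an interval of length $M^{3/4}$, so the $M^{3/4}$ count is spurious; and your final tally again leans on the same false ``convergent'' assertion for the small variables.

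What is missing is algebraic, not analytic. The paper itself gives no proof here but refers to Lemma~3.1 of Zhai \cite{Z}, where the corresponding square-root statement is proved by exploiting the structure of integer solutions to $\sum_j\pm n_j^{1/4}=0$. Writing $n_j=m_j^4 d_j$ with $d_j$ fourth-power-free, linear independence of the radicals $\{d^{1/4}\}$ over $\mathbb{Q}$ forces the relation to split into separate integer relations $\sum_{j:\,d_j=d}\pm m_j=0$ for each occurring value of $d$. Thus the largest variable is not just balanced in size by one on the other side---its fourth-power-free part must be matched exactly---and this collapses the parameter count enough to produce the uniform exponent $-3/4+\varepsilon$. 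A purely size-based argument of the type you sketch cannot see this rigidity and therefore cannot reach the claimed bound.
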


\begin{proof}
This lemma can be proved by the same argument of Lemma 3.1 of the
second author \cite{Z}, so we omit the details.
\end{proof}


\section{\bf Proof of Theorem 1}
In order to prove Theorem 1, it suffices for us to  study the upper bound of the integral
$\int_T^{2T}|\Delta_1(x;\varphi)|^Adx $.

\subsection{\bf A large value estimate of $\Delta_1(x;\varphi)$}
We first prove the following Theorem 4 via Ivi\'c approach (see,
Chapter 13 of \cite{I3}).

\begin{thm} Suppose $T\leq x_1<x_2<\cdots <x_M\leq 2T$
satisfies $|\Delta_1(x_l;\varphi)|\gg VT \ \ (l=1,2,\cdots,M)$ and
$|x_j-x_i|\gg V\gg T^{2/21}{\cal L}^{34/21} \, (i\not= j),$ then we have
\begin{equation}
M \ll TV^{-3}{\cal L}^5+T^{4}V^{-25}{\cal L}^{39},
\end{equation}
where we put ${\cal L}=\log T$.
\end{thm}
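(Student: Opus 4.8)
The plan is to follow Ivić's large-value machinery (Chapter 13 of \cite{I3}) applied to the truncated Voronoï-type expansion for $\Delta_1(x;\varphi)$. First I would invoke Lemma 2.1 with a suitable truncation parameter $N$ (to be chosen as a small power of $T$, balancing the two error terms $x^{1+\varepsilon}$ and $x^{3/2+\varepsilon}N^{-1/2}$ against the target lower bound $VT$), so that on each $x_l$ the main contribution comes from the finite Dirichlet polynomial
$$
{\cal R}_1(x_l;N)=(2\pi)^{-2}x_l^{9/8}\sum_{n\leq N}\frac{c_n}{n^{7/8}}\cos\!\left(8\pi\sqrt[4]{n x_l}-\tfrac{\pi}{4}\right).
$$
Since $|\Delta_1(x_l;\varphi)|\gg VT$ and the error terms are of smaller order (using $V\gg T^{2/21}{\cal L}^{34/21}$, which makes $VT$ dominate $T^{1+\varepsilon}$-type contributions), we get $|{\cal R}_1(x_l;N)|\gg VT$, i.e. the dyadically-decomposed exponential sum
$$
\Big|\sum_{n\sim N_0}\frac{c_n}{n^{7/8}}e\big(4\sqrt[4]{n x_l}\big)\Big|\gg VT^{-1/8}{\cal L}^{-1}
$$
for some $N_0\leq N$, after splitting $\cos$ into exponentials and pigeonholing over $O({\cal L})$ dyadic blocks.

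Next I would set up the Halász–Montgomery inequality (Lemma 2.2) in the inner-product space $L^2$ (or $\ell^2$ over the coefficient index $n$), taking $\xi$ to be the coefficient vector $(c_n n^{-7/8})_{n\sim N_0}$ and $\varphi_l$ the vectors with entries $e(4\sqrt[4]{n x_l})$. This yields
$$
M\,(VT^{-1/8}{\cal L}^{-1})^2 \ll \|\xi\|^2 \max_{l_1}\sum_{l_2\leq M}\Big|\sum_{n\sim N_0}\frac{c_n^2}{n^{7/4}}\,e\big(4\sqrt[4]{n}(\sqrt[4]{x_{l_1}}-\sqrt[4]{x_{l_2}})\big)\Big|.
$$
Here $\|\xi\|^2\ll N_0^{1/4+\varepsilon}\ll N_0^{1/4}{\cal L}^{O(1)}$ by $c_n\ll n^\varepsilon$. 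The diagonal term $l_1=l_2$ contributes the same $N_0^{1/4}$-size quantity times $1$. For the off-diagonal terms I would estimate the inner exponential sum over $n\sim N_0$ by the first-derivative test / van der Corput: the phase $4\sqrt[4]{n}\,(\sqrt[4]{x_{l_1}}-\sqrt[4]{x_{l_2}})$ has derivative in $n$ of size $\asymp N_0^{-3/4}T^{-3/4}|x_{l_1}-x_{l_2}|$ (using $x\asymp T$), and since $|x_{l_1}-x_{l_2}|\gg V$ is bounded below, this derivative is bounded below, so the sum over $n\sim N_0$ is $\ll \big(N_0^{-3/4}T^{-3/4}|x_{l_1}-x_{l_2}|\big)^{-1}\cdot$(coefficient size) plus lower order — this is where the well-spacedness hypothesis $|x_j-x_i|\gg V$ is essential. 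Summing the resulting bound over $l_2$ (the spacing turns $\sum 1/|x_{l_1}-x_{l_2}|$ into an $O({\cal L})$ or a clean geometric-type sum) produces a second term.

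Putting the two contributions together gives an inequality of the shape
$$
M\,V^2 T^{-1/4}{\cal L}^{-2} \ll N_0^{1/4}{\cal L}^{O(1)}\Big(N_0^{1/4}{\cal L}^{O(1)} + T^{3/2}N_0^{-1/4}V^{-1}{\cal L}^{O(1)}\Big),
$$
from which, after substituting the chosen value of $N$ (hence the range of $N_0$) and optimizing, one extracts $M\ll TV^{-3}{\cal L}^5 + T^4 V^{-25}{\cal L}^{39}$. The first term $TV^{-3}{\cal L}^5$ is the "trivial''/large-sieve-type bound one gets from the mean-square estimate (1.5) combined with spacing, essentially $M\cdot (VT)^2 \ll \int_T^{2T}\Delta_1^2\,dx \asymp T^{13/4}$ adjusted — wait, more precisely it comes from the diagonal plus the global $L^2$ norm; the second term $T^4V^{-25}{\cal L}^{39}$ is the genuine Halász–Montgomery output, and tracking the powers of ${\cal L}$ through the $O({\cal L})$ dyadic pigeonholes (each dyadic split costs one log, and there are several: truncation blocks, the $n$-ranges, and the spacing sums) is what produces the exponents $5$ and $39$.

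The main obstacle I anticipate is twofold. First, the bookkeeping of the logarithmic factors: one must be careful that each pigeonhole/dyadic decomposition (over the truncation level $N_0$, over the scale of $|x_{l_1}-x_{l_2}|$, and in converting the trivial $\ell^2$ bound) is accounted for, so that the final powers $5$ and $39$ are correct rather than merely "${\cal L}^{O(1)}$''. Second, and more substantively, the off-diagonal estimate must be uniform: one needs the first-derivative bound on $\sum_{n\sim N_0}c_n^2 n^{-7/4}e(\cdots)$ to hold for \emph{all} pairs with $|x_{l_1}-x_{l_2}|\gg V$ and to sum nicely over $l_2$, which forces the hypothesis $V\gg T^{2/21}{\cal L}^{34/21}$ — this threshold is exactly what makes the truncation error in Lemma 2.1 negligible \emph{and} makes the two terms in the final bound balance at the critical value of $V$ where the argument is applied later (in the proof of Theorem 1). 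Getting that threshold to come out as stated, rather than something weaker, is the delicate point and will require choosing $N$ optimally in terms of $V$ and $T$ at the outset.
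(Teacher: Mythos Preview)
Your outline has the right general shape (Voronoï truncation $\to$ dyadic pigeonhole $\to$ Hal\'asz--Montgomery), but two concrete ingredients that drive the actual exponents are missing, and a couple of details are set up incorrectly.

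\medskip
\textbf{Missing ingredient 1: the subdivision into subintervals of length $T_0$.} In the paper's argument one does \emph{not} apply Hal\'asz--Montgomery directly to all $M$ points in $[T,2T]$. Instead one first restricts to a subinterval $I\subset[T,2T]$ of length at most $T_0$ (a parameter to be chosen), bounds the number $M_0$ of large-value points in $I$, and only at the end multiplies by $1+T/T_0$. The second term $T^4V^{-25}{\cal L}^{39}$ in (3.1) is precisely $(T/T_0)\cdot TV^{-3}{\cal L}^{5}$ with the optimal choice $T_0=V^{22}T^{-2}{\cal L}^{-34}$. Your write-up never introduces $T_0$, so there is no mechanism to produce the exponent $25$.

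\medskip
\textbf{Missing ingredient 2: the exponent pair, and the true origin of the threshold $V\gg T^{2/21}{\cal L}^{34/21}$.} The off-diagonal inner product is the \emph{pure} exponential sum $\sum_{n\sim N_0}e\!\big(4n^{1/4}(x_{l_1}^{1/4}-x_{l_2}^{1/4})\big)$; the Kuzmin--Landau (first-derivative) bound $(N_0T)^{3/4}/|x_{l_1}-x_{l_2}|$ is only useful when the derivative is small. When $|x_{l_1}-x_{l_2}|$ is large one needs a van der Corput--type bound, and the paper uses the exponent pair $(1/11,9/11)$ to get an extra term $\ll T^{-3/44}T_0^{1/11}N_0^{3/4}$. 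That term, summed over $l_2\le M_0$, is what forces the choice of $T_0$ above; and the hypothesis $V\gg T^{2/21}{\cal L}^{34/21}$ is exactly the condition $T_0\ge V$, needed so that a subinterval of length $T_0$ can contain more than one $V$-spaced point. It is \emph{not}, as you suggest, the condition making the truncation error in Lemma~2.1 negligible (that is handled by taking $N\asymp T^{1+4\varepsilon}V^{-2}$, which works throughout the range $T^{1/8}\ll V\ll T^{1/5}$).

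\medskip
\textbf{Two smaller slips.} In the Hal\'asz--Montgomery setup the off-diagonal $(\varphi_{l_1},\varphi_{l_2})$ carries no coefficients $c_n^2n^{-7/4}$; those go entirely into $\|\xi\|^2=\sum_{n\sim N_0}c_n^2n^{-7/4}\ll N_0^{-3/4}{\cal L}^{1+\varepsilon}$ (using (3.3), the Rankin--Selberg second moment for $c_n$), not $N_0^{1/4+\varepsilon}$ as you wrote. Getting a clean exponential sum here is what allows the exponent pair to be applied directly.
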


\begin{proof} Suppose $V<T_0$ is a parameter to be determined later.
Let $I$ be any subinterval of $[T,2T]$ of length not exceeding $T_0$
and let $G=I\cap \{x_1,x_2,\cdots,x_M\}$. Without loss of
generality, we may suppose $G=\{x_1,x_2,\cdots,x_{M_0}\}$.

Suppose $J=\left[\frac{(1+4\varepsilon){\cal L}-2\log V}{\log 2}\right]$,
then $N=2^{J+1} \asymp T^{1+4\varepsilon}V^{-2}$. By Lemma 2.1 we get ($T\leq x\leq
2T$)
\begin{align*}
\Delta_1(x;\varphi)
& \ll T^{\frac 98}\left|\sum_{n\leq N}\frac{c_n}{n^{7/8}}e(4(nx)^{1/4})\right|+T^{1-\varepsilon}V \\
& \ll T^{\frac 98}\left|\sum_{j=0}^{J} \sum_{n \sim 2^j} \frac{c_n}{n^{7/8}}e(4(nx)^{1/4})\right|
      +T^{1-\varepsilon}V.
\end{align*}
Squaring, summing over the set $G$ and then using the Cauchy inequality we get that
\begin{align}
\sum_{l \leq M_0} \Delta_1^2(x_l;\varphi)
& \ll T^{\frac 94} \sum_{l \leq M_0} \left|\sum_{j}\sum_{n\sim 2^{j}}
       c_n n^{-7/8}e\left(4(nx_l)^{1/4}\right)\right|^2 \\
& \ll T^{\frac 94}{\cal L}\sum_{l \leq M_0}\sum_{j}\left|\sum_{n\sim 2^{j}}c_n
         n^{-7/8}e\left(4(nx_l)^{1/4}\right)\right|^2 \nonumber \\
& \ll T^{\frac 94}{\cal L}^2 \sum_{l \leq M_0} \left|\sum_{n\sim 2^{j_0}}
         c_nn^{-7/8}e\left(4(nx_l)^{1/4}\right)\right|^2 \nonumber
\end{align}
for some $0\leq j_0 \leq J$. Let $N_0=2^{j_0}$.

Take $\xi=\{\xi_n\}_{n=1}^{\infty}$ with $\xi_n=c_nn^{-7/8}$ for
$n\sim N_0$ and zero otherwise, and take
$\varphi_l=\{\varphi_{l,n}\}_{n=1}^{\infty}$ with
$\varphi_{l,n}=e(4(nx_l)^{1/4})$ for $n\sim N_0$ and zero otherwise.
Then
\begin{align*}
&(\xi,\varphi_l)=\sum_{n\sim N_0}\frac{c_n}{n^{7/8}}e(4(nx_l)^{1/4}),\\
&(\varphi_{l_1},\varphi_{l_2})=\sum_{n\sim N_0}e\left(4n^{1/4}(x_{l_1}^{1/4}-x_{l_2}^{1/4})\right),\\
& \Vert \xi\Vert^2=\sum_{n\sim N_0}\frac{c_n^2}{n^{7/4}}\ll
  N_0^{-7/4}\sum_{n\sim N_0}c_n^2\ll N_0^{-3/4}{\cal L}^{1+\varepsilon},
\end{align*}
where we used the bound
\begin{equation}
\sum_{n\leq x}c_n^2\ll x\log^{1+\varepsilon} x
\end{equation}
proved in \cite{M}.

From (3.2) and Lemma 2.2  we get
\begin{align}
M_0V^2 & \ll T^{1/4}{\cal L}^2\sum_{l \leq M_0}\left|\sum_{n\sim N_0}
  c_n n^{-7/8}e\left(4(nx_l)^{1/4}\right)\right|^2\\
& \ll {\cal L}^{3+\varepsilon}T^{1/4}N_0^{-3/4}\max_{l_1\leq M_0}\sum_{l_2\leq
       M_0}\left|\sum_{n\sim N_0}e\left(4n^{1/4}(x_{l_1}^{1/4}-x_{l_2}^{1/4})\right) \right|\nonumber\\
& \ll {\cal L}^{3+\varepsilon}T^{1/4}N_0^{1/4}+\frac{{\cal L}^{3+\varepsilon}T^{1/4}}{N_0^{3/4}}
       \max_{l_1\leq M_0}\sum_{\begin{subarray}{c} l_2\leq M_0, \\ l_2\not= l_1 \end{subarray}}
       \left|\sum_{n\sim N_0}e\left(4n^{1/4}(x_{l_1}^{1/4}-x_{l_2}^{1/4})\right) \right|.\nonumber
\end{align}

By the Kuzmin-Landau inequality and the exponent pair
$$
(1/11,9/11)=C_{6/11}((0,1),(1/6,4/6))
$$
we get
\begin{align*}
\sum_{n\sim N_0}e\left(4n^{1/4}(x_{l_1}^{1/4}-x_{l_2}^{1/4})\right)
& \ll \frac{N_0^{3/4}}{|\sqrt[4]{x_{l_1}}-\sqrt[4]{x_{l_2}}|}
      +\left(\frac{|\sqrt[4]{x_{l_1}}-\sqrt[4]{x_{l_2}}|}{N_0^{3/4}}\right)^{1/11}N_0^{9/11}\\
& \ll \frac{(N_0T)^{3/4}}{|x_{l_1}-x_{l_2}|}
      +\left(\frac{|x_{l_1}-x_{l_2}|}{(N_0T)^{3/4}}\right)^{1/11}N_0^{9/11}\\
& \ll \frac{(N_0T)^{3/4}}{|x_{l_1}-x_{l_2}|} +T^{-3/44}T_0^{1/11}N_0^{3/4},
\end{align*}
where we used the mean value theorem and the estimate $|x_{l_1}-x_{l_2}|\leq T_0.$

Substituting this estimate into (3.4) we get
\begin{align}
M_0V^2
&\ll {\cal L}^{3+\varepsilon}T^{1/4}N_0^{1/4}+{\cal L}^{3+\varepsilon}T^{1/4}N_0^{-3/4}\\
& \hspace{15mm}\times \max_{l_1\leq M_0}
  \sum_{\begin{subarray}{c} l_2\leq M_0,\\ l_2\not=l_1 \end{subarray}}
  \left(\frac{(N_0T)^{3/4}}{|x_{l_1}-x_{l_2}|}+T^{-3/44}T_0^{1/11}N_0^{3/4} \right)\nonumber\\
&\ll {\cal L}^{3+\varepsilon}T^{1/4}N_0^{1/4}+{\cal L}^{4+\varepsilon}TV^{-1}+{\cal L}^{3+\varepsilon}
     M_0T^{2/11}T_0^{1/11}\nonumber\\
&\ll  {\cal L}^{4+\varepsilon}TV^{-1}+{\cal L}^{3+\varepsilon}M_0T^{2/11}T_0^{1/11},\nonumber
\end{align}
where we used the facts that $\{x_l\}$ is $V$-spaced and $N_0\ll T^{1+4\varepsilon}V^{-2}$.
Take $T_0=V^{22}T^{-2}{\cal L}^{-34}$, it is easy to check that $T_0\gg V$ if
$V\gg T^{2/21}{\cal L}^{34/21}.$ We get for this $T_0$ that
$$
M_0\ll {\cal L}^{4+\varepsilon}TV^{-3}.
$$

Now we divide the interval  $[T,2T]$ into  $O(1+T/T_0)$ subintervals
of length not exceeding $T_0.$ In each interval of this type, the
number of $x_l$'s is  at most $O({\cal L}^{4+\varepsilon}TV^{-3})$ .
So we have
\begin{equation}
M\ll {\cal L}^{4+\varepsilon}TV^{-3}(1+\frac{T}{T_0})\ll {\cal L}^5TV^{-3}+{\cal L}^{39}T^{4}V^{-25}.
\end{equation}

This completes the proof of Theorem 4.
\end{proof}

\subsection{\bf Proof of Theorem 1}

Now we prove Theorem 1. When $A=0$, Theorem 1 is trivial . When $0<A<2$, it follows from (1.6)
and the H\"older's inequality. So later we always suppose $A>2$.  It suffices for us to prove the
estimate
\begin{equation}
\int_T^{2T}|\Delta_1(x;\varphi)|^Adx
\ll\left\{\begin{array}{ll} T^{1+ 9A/8}\log^{39} T,&\mbox{if $2<A\leq 16/3,$}\\[1ex]
                            T^{(3+6A)/5 }\log^{5} T,& \mbox{if $A>16/3.$}
\end{array}\right.
\end{equation}

Suppose $x^\varepsilon\ll y\leq x/2.$ By the definition of $D_\rho(x;\varphi)$ we get
$$
\Delta_1(x;\varphi)= \sum_{n\leq x}(x-n)  c_n  -\frac{\pi^2 \kappa R_0}{12}x^{2}- Z(0) x.
$$
So we have
\begin{align*}
& |\Delta_1(x+y;\varphi)-\Delta_1(x;\varphi)|\\
& \ll |\sum_{n\leq x+y}(x+y-n)  c_n-\sum_{n\leq x}(x-n) c_n|+|(x+y)^2-x^2|+y\\
& \ll y|\sum_{n\leq x} c_n |+|\sum_{x<n\leq x+y}(x+y-n) c_n|+yx\ll yx,
\end{align*}
where we used the bound $\sum_{x<n\leq x+y}c_n\ll y$ proved in \cite{IMT}.
So there exists an absolute constant $c_0>0$ such that
$$
|\Delta_1(x+y;\varphi)-\Delta_1(x;\varphi)|\leq c_0xy,
$$
which implies that if $|\Delta_1(x;\varphi)|\geq 2c_0xy$, then
\begin{equation}
|\Delta_1(x+y;\varphi)|\geq |\Delta_1(x;\varphi)|-|\Delta_1(x+y;\varphi)-\Delta_1(x;\varphi)|
\geq c_0xy.
\end{equation}

From (3.8) and an argument  similar to (13.70) of Ivi\'c [4] we may write
\begin{equation}
\int_T^{2T}|\Delta_1(x;\varphi)|^Adx\ll T^{\frac{8+9A}{8}}
+\sum_{V}V\sum_{r\leq N_V}|\Delta_1(x_r;\varphi)|^A,
\end{equation}
where $T^{1/8}\leq V=2^m\leq T^{1/5}$, $VT<|\Delta_1(x_r;\varphi)|\leq 2VT$ $(r=1,\cdots, N_V)$
and $|x_r-x_s|\geq V$ for $r\not= s\leq N=N_V.$

Suppose first $2<A< 24$.   By Theorem 4  we get
\begin{align}
& V \sum_{r\leq N_V}|\Delta_1(x_r;\varphi)|^A\ll T^{A}N_VV^{A+1}\\
& \ll {\cal L}^5T^{1+A}V^{A-2}+{\cal L}^{39}T^{4+A}V^{A-24}\nonumber\\
& \ll T^{1+A+\frac{1}{5}(A-2)}{\cal L}^{5}+ T^{1+\frac{9A}{8} }{\cal L}^{39}. \nonumber
\end{align}
If  $A> 24,$  then by Theorem 4 again we get
\begin{align}
& V\sum_{r\leq N_V}|\Delta_1(x_r;\varphi)|^A\ll T^{A}N_VV^{A+1}\\
& \ll {\cal L}^5T^{1+A}V^{A-2}+{\cal L}^{39}T^{4+A}V^{A-24}\nonumber\\
& \ll T^{1+A+\frac{1}{5}(A-2)}{\cal L}^{5}+ T^{ \frac{6A-4}{5}}{\cal L}^{39}\nonumber\\
& \ll  T^{ \frac{6A+3}{5}}{\cal L}^{5}.\nonumber
\end{align}
From (3.8)--(3.11) we get (3.7) easily. This completes the proof of Theorem 1.

\section{\bf Proofs of Theorem 2 and Theorem 3}

Suppose $T\geq 10$ is a real number. It suffices for us to evaluate the integral
$\int_T^{2T}\Delta_1^k(x)dx$. Suppose $y$ is a parameter such that
$T^\varepsilon<y\leq T^{1/3}$.  For any $T\leq x\leq 2T$, define
\begin{align*}
&{\cal R}_1={\cal R}_1(x,y): =(2\pi)^{-2}x^{9/8}\sum_{n\leq y}\frac{c_n}
 {n^{7/8}}\cos(8\pi\sqrt[4]{xn}-\frac{\pi}{4}),\\
&{\cal R}_2={\cal R}_2(x,y): =\Delta_1(x;\varphi)-{\cal R}_1.
\end{align*}
We shall show that the higher-power moment of ${\cal R}_2$ is small and hence the
integral $\int_T^{2T}\Delta_1^k(x;\varphi)dx$ can be well approximated by
$\int_T^{2T}{\cal R}_1^kdx,$ which is easy to evaluate.

\subsection{\bf Evaluation of the integral $\int_T^{2T}{\cal R}_1^kdx$}
For simplicity we set ${\Bbb I}=\{0,1\}$ and
$$
{\Bbb N}^k=\{\boldsymbol{n}: \boldsymbol{n}=(n_1,\cdots, n_k),n_j\in {\Bbb N},1\leq j\leq k\}.
$$
For each element $\boldsymbol{i}=(i_1,\cdots, i_{k-1})\in {\Bbb I}^{k-1}$,
put $|\boldsymbol{i}|=i_1+\cdots+i_{k-1}$. By the elementary formula
$$
\cos a_1\cdots \cos a_k=\frac{1}{2^{k-1}}\sum_{\boldsymbol{i}\in  {\Bbb I}^{k-1}} \cos
(a_1+(-1)^{i_1}a_2+(-1)^{i_2}a_3+\cdots +(-1)^{i_{k-1}}a_k),
$$
we have
\begin{align*}
{\cal R}_1^k&=( 2\pi)^{-2k}x^{\frac{9k}{8}} \sum_{n_1\leq y}\cdots \sum_{n_k\leq y}
\frac{c_{n_1}\cdots c_{n_k}}{(n_1\cdots n_k )^{7/8}}\prod_{j=1}^k\cos(8\pi\sqrt[4]{n_jx}- \frac{\pi}{4})\\
&=\frac{x^{\frac{9k}{8}}}{( 2\pi)^{2k}2^{k-1}} \sum_{\boldsymbol{i} \in {\Bbb I}^{k-1}}
\sum_{n_1\leq y}\cdots \sum_{n_k\leq y} \frac{c_{n_1}\cdots c_{n_k} }{(n_1\cdots n_k)^{7/8}}
\cos(8\pi\sqrt[4]x\alpha(\boldsymbol{n};\boldsymbol{i})-\frac{\pi}{4} \beta(\boldsymbol{i})),
\end{align*}
where
\begin{align*}
\alpha(\boldsymbol{n};\boldsymbol{i}):& = \sqrt[4]{n_1}+(-1)^{i_1}\sqrt[4]{n_2}
+(-1)^{i_2}\sqrt[4]{n_3}+\cdots +(-1)^{i_{k-1}}\sqrt[4]{n_k},\\
\beta(\boldsymbol{i}): &=1+(-1)^{i_1}+(-1)^{i_2}+\cdots +(-1)^{i_{k-1}}.
\end{align*}
Thus we can write
\begin{equation}
{\cal R}_1^k=\frac{1}{(  2\pi)^{2k}2^{k-1}}(S_1(x)+S_2(x)),
\end{equation}
where
\begin{align*}
S_1(x):&= x^{9k/8}\sum_{\boldsymbol{i}\in {\Bbb I}^{k-1}}\cos\left(-\frac{\pi\beta(\boldsymbol{i})}{4}
         \right) \sum_{\begin{subarray}{c} n_j\leq y,1\leq j\leq k \\
                       \alpha(\boldsymbol{n};\boldsymbol{i})=0\end{subarray}}
       \frac{ c_{n_1}\cdots c_{n_k}}{(n_1\cdots n_k)^{7/8}},\\
S_2(x):&= x^{9k/8}\sum_{\boldsymbol{i}\in  {\Bbb I}^{k-1}}
        \sum_{\begin{subarray}{c} n_j\leq y,1\leq j\leq k \\
              \alpha(\boldsymbol{n};\boldsymbol{i})\not= 0 \end{subarray}}
       \frac{ c_{n_1}\cdots c_{n_k}}{(n_1\cdots n_k)^{7/8}} \cos\left(
        8\pi\alpha(\boldsymbol{n};\boldsymbol{i})\sqrt[4]x-\frac{\pi\beta(\boldsymbol{i})}{4}\right) .
\end{align*}

First consider the contribution of $S_1(x).$ We have
\begin{align}
&\int_T^{2T}S_1(x)dx \\
&=\sum_{\boldsymbol{i} \in {\Bbb I}^{k-1}}
\cos\left(-\frac{\pi\beta(\boldsymbol{i})}{4}\right)\sum_{\begin{subarray}{c}
n_j\leq y,1\leq j\leq k \\ \alpha(\boldsymbol{n};\boldsymbol{i})=0 \end{subarray}}
\frac{ c_{n_1}\cdots c_{n_k}}{(n_1\cdots n_k)^{7/8}}\int_T^{2T}x^{\frac{9k}{8}}dx. \nonumber
\end{align}

It is easily seen that if $\alpha(\boldsymbol{n};\boldsymbol{i})=0,$  then $1\leq
|\boldsymbol{i}|\leq k-1$. Let $l= |\boldsymbol{i}|$, then we have
$$
\sum_{\stackrel{n_j\leq y,1\leq j\leq k}{\alpha(\boldsymbol{n};\boldsymbol{i})= 0}}
\frac{c_{n_1}\cdots c_{n_k} }{(n_1\cdots n_k)^{7/8}}=s_{k;l}(c;y),
$$
where $s_{k;l}(f;y)$ was defined in Lemma 2.4. By Lemma 2.4 we get
\begin{equation}
\int_T^{2T}S_1(x)dx=B_k^{*}(c)\int_T^{2T}x^{\frac{9k}{8}}dx
+O(T^{1+9k/8+\varepsilon}y^{-3/4}),
\end{equation}
where
$$
B_k^{*}(c):=\sum_{\boldsymbol{i} \in  {\Bbb I}^{k-1}}\cos(-\frac{\pi\beta(\boldsymbol{i})}{4})
\sum_{\stackrel{(n_1,\cdots,n_k)\in {\Bbb N}^k}{\alpha(\boldsymbol{n};\boldsymbol{i})= 0}}
\frac{c_{n_1}\cdots c_{n_k} }{(n_1\cdots n_k)^{7/8}}.
$$

For any $\boldsymbol{i}\in {\Bbb I}^{k-1}\setminus \boldsymbol{0}$, let
\begin{equation*}
S(c;\boldsymbol{i}):=\sum_{\stackrel{(n_1,\cdots,n_k)\in {\Bbb
N}^k}{\alpha(\boldsymbol{n};\boldsymbol{i})= 0}} \frac{c_{n_1}\cdots c_{n_k}
}{(n_1\cdots n_k)^{7/8}} .
\end{equation*}
It is easily seen that if $ |\boldsymbol{i}|= |\boldsymbol{i}^{\prime}|$ or $|\boldsymbol{i}|
+|\boldsymbol{i}^{\prime}| =k,$ then
$$
S(c;\boldsymbol{i})=S(c;\boldsymbol{i}^{\prime})=s_{k;|\boldsymbol{i}|}(c).
$$
From  $(-1)^j=1-2j \ \ (j=0,1)$ we also have $\beta(\boldsymbol{i})=k-2|\boldsymbol{i}|$. So we get
\begin{align}
B_k^{*}(c)
&=\sum_{l=1}^{k-1}\sum_{|\boldsymbol{i}| =l} \cos(-\frac{\pi\beta(\boldsymbol{i})}{4})
  S(c;\boldsymbol{i})\\
&=\sum_{l=1}^{k-1}s_{k;l}(c)\cos \frac{\pi(k-2l)}{4}\sum_{|\boldsymbol{i}|=l}1 \nonumber\\
&=\sum_{l=1}^{k-1}{k-1\choose l}s_{k;l}(c)\cos \frac{\pi(k-2l)}{4}=B_k(c).\nonumber
\end{align}

Now we consider the contribution of $S_2(x).$ By the first derivative test and Lemma 2.3 we get
\begin{align}
\int_T^{2T}S_2(x)dx
&\ll T^{\frac 34+\frac{9k}{8}} \sum_{\boldsymbol{i}\in {\Bbb I}^{k-1}}
\sum_{\begin{subarray}{c} n_j\leq y,1\leq j\leq k \\
       \alpha(\boldsymbol{n};\boldsymbol{i})\not= 0\end{subarray}}
       \frac{c_{n_1}\cdots c_{n_k} }{(n_1\cdots n_k)^{7/8}|\alpha(\boldsymbol{n};\boldsymbol{i})|}\\
&\ll T^{\frac 34+\frac{9k}{8}}y^{4^{k-2}-1/4} \sum_{ \boldsymbol{i}\in {\Bbb I}^{k-1}}
\sum_{\begin{subarray}{c} n_j\leq y,1\leq j\leq k \\
      \alpha(\boldsymbol{n};\boldsymbol{i})\not= 0 \end{subarray}}
      \frac{c_{n_1}\cdots c_{n_k} }{(n_1\cdots n_k)^{7/8} }\nonumber\\
&\ll T^{\frac 34+\frac{9k}{8}}y^{4^{k-2}-1/4+k/8}\ll T^{\frac 34+\frac{9k}{8}}y^{b(k) }.\nonumber
\end{align}

Hence from (4.1)-(4.6) we get
\begin{lem} For any fixed $k\geq 3,$ we have
\begin{align}
\int_T^{2T}{\cal R}_1^kdx=\frac{B_k(c)}{(2\pi)^{2k}2^{k-1}}\int_T^{2T} \!\! x^{\frac{9k}{8}}dx
+O(T^{1+\frac{9k}{8}+\varepsilon}y^{-\frac 34}+T^{\frac
34+\frac{9k}{8}+\varepsilon}y^{b(k)}).
\end{align}
\end{lem}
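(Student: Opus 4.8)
The plan is to assemble the identities and estimates (4.1)--(4.6) developed above. First I would apply the product-to-sum identity for $\prod_{j=1}^{k}\cos(8\pi\sqrt[4]{n_jx}-\pi/4)$ to expand ${\cal R}_1^k$; this yields the decomposition (4.1), namely ${\cal R}_1^k=\frac{1}{(2\pi)^{2k}2^{k-1}}(S_1(x)+S_2(x))$, where $S_1$ collects the ``diagonal'' terms with $\alpha(\boldsymbol n;\boldsymbol i)=0$ and $S_2$ the ``off-diagonal'' terms with $\alpha(\boldsymbol n;\boldsymbol i)\neq 0$. Integrating over $[T,2T]$ and pulling out the ($k$-dependent) constant, it then suffices to extract the main term from $\int_T^{2T}S_1(x)\,dx$ and to bound $\int_T^{2T}S_2(x)\,dx$ by the error displayed on the right of the lemma.

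For the diagonal piece, since the relation $\alpha(\boldsymbol n;\boldsymbol i)=0$ does not involve $x$, each term contributes its coefficient times $\int_T^{2T}x^{9k/8}\,dx$, which is (4.2). For fixed $\boldsymbol i$ with $|\boldsymbol i|=l$ the coefficient is the truncated series $s_{k;l}(c;y)$ of Lemma 2.4, and replacing it by $s_{k;l}(c)$ costs $O(y^{-3/4+\varepsilon})$; since there are only $O_k(1)$ vectors $\boldsymbol i$ and $\int_T^{2T}x^{9k/8}\,dx\ll T^{1+9k/8}$, this gives (4.3) with error $O(T^{1+9k/8+\varepsilon}y^{-3/4})$. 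It then remains to recognise the resulting constant $B_k^{\ast}(c)$ as $B_k(c)$: grouping the $\boldsymbol i$'s according to $l=|\boldsymbol i|$, using $\beta(\boldsymbol i)=k-2l$ and the fact that $S(c;\boldsymbol i)$ depends on $\boldsymbol i$ only through $l$ (and is invariant under $l\mapsto k-l$ after relabelling the $n_j$), one arrives at $\sum_{l=1}^{k-1}\binom{k-1}{l}s_{k;l}(c)\cos\frac{\pi(k-2l)}{4}=B_k(c)$, which is (4.4).

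For the off-diagonal piece, for each nonzero $\alpha=\alpha(\boldsymbol n;\boldsymbol i)$ the phase $8\pi\alpha\sqrt[4]{x}$ has derivative of absolute value $\asymp|\alpha|T^{-3/4}$, which is monotone on $[T,2T]$, so the first derivative test applied to $\int_T^{2T}x^{9k/8}\cos(8\pi\alpha\sqrt[4]{x}-\tfrac{\pi\beta(\boldsymbol i)}{4})\,dx$ gives a bound $\ll T^{3/4+9k/8}|\alpha|^{-1}$. By Lemma 2.3, $|\alpha|^{-1}\ll\max(n_1,\dots,n_k)^{4^{k-2}-1/4}\le y^{4^{k-2}-1/4}$ throughout $n_j\le y$, while $\sum_{n_1,\dots,n_k\le y}\frac{c_{n_1}\cdots c_{n_k}}{(n_1\cdots n_k)^{7/8}}\ll\big(\sum_{n\le y}n^{-7/8+\varepsilon}\big)^{k}\ll y^{k/8+\varepsilon}$ since $c_n\ll n^{\varepsilon}$. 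Summing over the $O_k(1)$ vectors $\boldsymbol i$ yields (4.6): $\int_T^{2T}S_2(x)\,dx\ll T^{3/4+9k/8+\varepsilon}y^{4^{k-2}-1/4+k/8}=T^{3/4+9k/8+\varepsilon}y^{b(k)}$. Combining this with (4.1), (4.3) and (4.4), and absorbing $(2\pi)^{-2k}2^{-(k-1)}$ into the error, proves the lemma.

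The mechanical part is routine bookkeeping; the genuine content is packaged in the two preliminary lemmas. Lemma 2.3 is the essential ingredient: it is the Liouville-type lower bound that keeps $|\alpha|$ from being catastrophically small, so that the first derivative test is effective at all, and the exponent $b(k)$ is precisely what it produces; Lemma 2.4 simultaneously guarantees the convergence of the diagonal series $s_{k;l}(c)$ and controls its tail. The one step needing a little care is the combinatorial identity $B_k^{\ast}(c)=B_k(c)$, where for each $l$ one must count correctly the $\binom{k-1}{l}$ vectors $\boldsymbol i$ with $|\boldsymbol i|=l$ and verify that $\cos(\pi\beta(\boldsymbol i)/4)$ depends on $\boldsymbol i$ only through $l$.
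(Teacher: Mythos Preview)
Your proposal is correct and follows essentially the same approach as the paper: the product-to-sum expansion yielding the diagonal/off-diagonal split (4.1), treatment of $S_1$ via Lemma 2.4 and the combinatorial identification $B_k^{*}(c)=B_k(c)$, and treatment of $S_2$ via the first derivative test combined with Lemma 2.3. The only cosmetic difference is that the paper omits the $\varepsilon$ in the second error term in (4.6), absorbing the $y^{k/8+\varepsilon}$ into $y^{b(k)}$ (harmless since $y\le T^{1/3}$), whereas you keep it explicit.
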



\subsection{\bf Higher-power moments of ${\cal R}_2$}
Taking $N=T$ in the formula (2.2) of Lemma 2.1,  we get
\begin{align}
{\cal R}_2 &=(2\pi )^{-2}x^{\frac 98}\sum_{y<n\leq T}\frac{c_n}{n^{7/8}} \cos(8\pi\sqrt[4]{nx}-\pi/4)
+O(T^{1+\varepsilon})\\
&\ll |x^{\frac 98}\sum_{y<n\leq T}\frac{c_n}{n^{7/8}} e(4\sqrt[4]{nx})|+T^{1+\varepsilon},\nonumber
\end{align}
which implies
\begin{align}
&\int_T^{2T}{\cal R}_2^2dx
\ll T^{3+\varepsilon}+\int_T^{2T}|x^{\frac 98}\sum_{y<n\leq T}\frac{c_n}{n^{ 7/8}}
     e(4\sqrt[4]{nx})|^2dt \\
& \quad \ll T^{3+\varepsilon}+T^{13/4}\sum_{y<n\leq T}\frac{c_n^2}{n^{7/4}}
     +T^3\sum_{y<m<n\leq T}\frac{c_nc_m}{(mn)^{7/8}(\sqrt[4]n-\sqrt[4]m)} \nonumber\\
& \quad \ll T^{3+\varepsilon}+\frac{T^{13/4+\varepsilon} } {y^{3/4}} \nonumber \\
& \quad \ll \frac{T^{13/4+\varepsilon}}{y^{3/4}},\nonumber
\end{align}
where we used the estimate (3.3) and the estimate
$$
\sum_{y<m<n\leq T}\frac{c_nc_m }{(mn)^{7/8}(\sqrt[4]n-\sqrt[4]m)} \ll T^{\varepsilon},
$$
which can be proved in a standard way.

Now suppose $y$ satisfies $y^{4b(K_0)}\leq T$. Hence from Lemma 4.1
we get that
$$
\int_T^{2T}|{\cal R}_1|^{K_0}dx\ll T^{1+9K_0/8+\varepsilon},
$$
which implies
\begin{equation}
\int_T^{2T}|{\cal R}_1|^{A_0}dx\ll T^{1+9A_0/8+\varepsilon}
\end{equation}
since $A_0\leq K_0.$ From (1.10) and (4.9) we get
\begin{equation}
\int_T^{2T}|{\cal R}_2|^{A_0}dx\ll \int_T^{2T}(|\Delta_1(x;\varphi)|^{A_0}
+|{\cal R}_1|^{A_0})dx\ll T^{1+9A_0/8+\varepsilon}.
\end{equation}

For any $2<A<A_0,$ from (4.8), (4.10) and  H\"older's inequality we
get that
\begin{align}
\int_T^{2T}|{\cal R}_2|^{A}dx
&=\int_T^{2T}|{\cal R}_2|^{\frac{2(A_0-A)}{A_0-2}+\frac{A_0(A-2)}{A_0-2}}dx \\
& \ll \left(\int_T^{2T}{\cal R}_2^2dx\right)^{\frac{A_0-A}{A_0-2}}
 \left( \int_T^{2T}|{\cal R}_2|^{A_0}dx\right)^{\frac{A-2}{A_0-2}} \nonumber \\[1ex]
& \ll T^{1+\frac{9A}{8}+\varepsilon}y^{-\frac{3(A_0-A)}{4(A_0-2)}}. \nonumber
\end{align}

Therefore we have
\begin{lem} Suppose $T^\varepsilon\leq y\leq T^{1/4b(K_0)},$
$2<A<A_0,$  then
\begin{equation}
 \int_T^{2T}|{\cal R}_2|^{A}dx\ll
T^{1+\frac{9A}{8}+\varepsilon}y^{-\frac{3(A_0-A)}{4(A_0-2)}}.
\end{equation}
\end{lem}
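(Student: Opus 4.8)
The plan is to interpolate, via Hölder's inequality, between a sharp mean-square bound for $\mathcal{R}_2$ and a near-optimal $A_0$-th moment bound, both of which are within reach of results already available. For the mean square, taking $N=T$ in Lemma 2.1 represents $\mathcal{R}_2$ as $(2\pi)^{-2}x^{9/8}\sum_{y<n\le T}c_n n^{-7/8}\cos(8\pi\sqrt[4]{nx}-\pi/4)$ up to an error $O(T^{1+\varepsilon})$; squaring and integrating over $[T,2T]$, the diagonal terms $n=m$ contribute $\ll T^{13/4}\sum_{y<n\le T}c_n^2 n^{-7/4}\ll T^{13/4+\varepsilon}y^{-3/4}$ by partial summation and the bound (3.3), while the first-derivative test reduces the off-diagonal terms to $\ll T^{3+\varepsilon}\sum_{y<m<n\le T}c_nc_m(mn)^{-7/8}(\sqrt[4]{n}-\sqrt[4]{m})^{-1}$, and this last sum is $\ll T^{\varepsilon}$ by a dyadic splitting together with the mean value estimate $\sqrt[4]{n}-\sqrt[4]{m}\gg(n-m)n^{-3/4}$ and $c_n\ll n^{\varepsilon}$. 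Since $y\le T^{1/4b(K_0)}$ makes $T^{3+\varepsilon}$ negligible against $T^{13/4+\varepsilon}y^{-3/4}$, this yields $\int_T^{2T}\mathcal{R}_2^2\,dx\ll T^{13/4+\varepsilon}y^{-3/4}$.

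For the $A_0$-th moment I would use Lemma 4.1 with $k=K_0$: because $y^{4b(K_0)}\le T$, both error terms there are $\ll T^{1+9K_0/8+\varepsilon}$, so (using that $K_0$ is even, whence $\mathcal{R}_1^{K_0}=|\mathcal{R}_1|^{K_0}$) we get $\int_T^{2T}|\mathcal{R}_1|^{K_0}\,dx\ll T^{1+9K_0/8+\varepsilon}$, and Hölder's inequality together with $A_0\le K_0$ gives $\int_T^{2T}|\mathcal{R}_1|^{A_0}\,dx\ll T^{1+9A_0/8+\varepsilon}$. On the other hand the hypothesis (1.9) yields $\int_T^{2T}|\Delta_1(x;\varphi)|^{A_0}\,dx\ll T^{1+9A_0/8+\varepsilon}$, so writing $\mathcal{R}_2=\Delta_1(x;\varphi)-\mathcal{R}_1$ and applying the elementary inequality $|a+b|^{A_0}\ll|a|^{A_0}+|b|^{A_0}$ produces $\int_T^{2T}|\mathcal{R}_2|^{A_0}\,dx\ll T^{1+9A_0/8+\varepsilon}$.

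To finish, set $\lambda=(A_0-A)/(A_0-2)\in(0,1)$, so that $A=2\lambda+A_0(1-\lambda)$ and $|\mathcal{R}_2|^{A}=|\mathcal{R}_2|^{2\lambda}\,|\mathcal{R}_2|^{A_0(1-\lambda)}$; Hölder with exponents $1/\lambda$ and $1/(1-\lambda)$ gives
\[
\int_T^{2T}|\mathcal{R}_2|^{A}\,dx\ll\Bigl(\int_T^{2T}\mathcal{R}_2^2\,dx\Bigr)^{\lambda}\Bigl(\int_T^{2T}|\mathcal{R}_2|^{A_0}\,dx\Bigr)^{1-\lambda}.
\]
Substituting the two bounds above, the exponent of $T$ equals $\tfrac{13}{4}\lambda+\bigl(1+\tfrac{9A_0}{8}\bigr)(1-\lambda)=1+\tfrac{9}{8}\bigl(2\lambda+A_0(1-\lambda)\bigr)=1+\tfrac{9A}{8}$ and the exponent of $y$ equals $-\tfrac34\lambda=-\tfrac{3(A_0-A)}{4(A_0-2)}$, which is exactly the assertion, the $\varepsilon$'s being harmless. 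The only genuinely non-routine point is the off-diagonal estimate in the mean-square step — where the upper restriction on $y$ and the gain $y^{-3/4}$ originate — while everything else is bookkeeping with Hölder's inequality and the already-established Lemmas 2.1 and 4.1 together with the hypothesis (1.9).
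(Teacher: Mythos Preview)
Your proof is correct and matches the paper's argument step for step: the mean-square bound (4.8), the $A_0$-th moment bound (4.10) via Lemma~4.1 with $k=K_0$ combined with hypothesis~(1.9), and the H\"older interpolation (4.11). One small quibble with your closing commentary only: the $y^{-3/4}$ saving comes from the \emph{diagonal} terms, and the restriction $y\le T^{1/4b(K_0)}$ is needed in order to invoke Lemma~4.1 at $k=K_0$, not for the off-diagonal sum.
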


\subsection{\bf Upper bound of the integral $\int_T^{2T}{\cal R}_1^{k-1}{\cal R}_2dx$}
In this subsection we shall estimate the integral $\int_T^{2T}{\cal R}_1^{k-1}{\cal R}_2dx$.
Suppose $T^\varepsilon\leq y\leq T^{1/4b(K_0)},$ which combining Lemma 4.1 implies that
$$
\int_T^{2T}|{\cal R}_1|^{k-1}dx\ll T^{1+\frac{9(k-1)}{8}}.
$$
Thus from (4.7) we get
\begin{equation}
\int_T^{2T}{\cal R}_1^{k-1}{\cal R}_2dx=\int_T^{2T}{\cal
R}_1^{k-1}{\cal R}_2^{*}dx+O(T^{1+\frac{9k}{8}-\frac
18+\varepsilon}),
\end{equation}
where
$$
{\cal R}_2^{*}=(2\pi )^{-2}x^{\frac 98}\sum_{y<n\leq
T}\frac{c_n}{n^{7/8}} \cos(8\pi\sqrt[4]{nx}-\pi/4).
$$

Similarly to (4.1) we can write
$$
{\cal R}_1^{k-1}{\cal R}_2^{*}=\frac{1}{(2\pi)^{2k}2^{k-1}}(S_3(x)+S_4(x)),
$$
where
\begin{align*}
&S_3(x)= x^{\frac{9k}{8}}  \sum_{\boldsymbol{i}\in {\Bbb I}^{k-1} }
\sum_{y<n_1\leq T}\sum_{\stackrel{n_j\leq y,2\leq j\leq k}{\alpha(\boldsymbol{n};\boldsymbol{i})=0}}
\frac{c_{n_1}c_{n_2}\cdots c_{n_k}}{(n_1\cdots n_k)^{7/8}} \cos ( -\frac{\pi}{4} \beta( \boldsymbol{i})),\\
&S_4(x)= x^{\frac{9k}{8}}  \sum_{\boldsymbol{i}\in {\Bbb I}^{k-1}}
\sum_{y<n_1\leq T}\sum_{\stackrel{n_j\leq y,2\leq j\leq k}{\alpha(\boldsymbol{n};\boldsymbol{i})\not=0}}
\frac{c_{n_1}c_{n_2}\cdots c_{n_k} }{(n_1\cdots n_k)^{7/8}} \cos (8\pi \sqrt[4]{x}\alpha(\boldsymbol{n};
\boldsymbol{i} )-\frac{\pi}{4} \beta( \boldsymbol{i})).
\end{align*}

By Lemma 2.4 the contribution of $S_3(x)$ is
\begin{align}
\int_T^{2T}S_3(x)dx
&\ll \sum_{\boldsymbol{i}\in {\Bbb I}^{k-1}}
\sum_{y<n_1\leq T}\sum_{\stackrel{n_j\leq y,2\leq j\leq k}{\alpha(\boldsymbol{n};\boldsymbol{i})=0}}
\frac{c_{n_1}c_{n_2}\cdots c_{n_k}}{(n_1\cdots n_k)^{7/8}} \int_T^{2T}x^{\frac{9k}{8}}dx\\
&\ll \sum_{l=1}^{k-1}|s_{k;l}(c;y)-s_{k;l}(c)|\int_T^{2T}x^{\frac{9k}{8}}dx\nonumber\\[1ex]
&\ll T^{1+\frac{9k}{8}+\varepsilon}y^{-3/4}.\nonumber
\end{align}

By the first derivative test  and Lemma 2.3 we get
\begin{align}
\int_T^{2T}S_4(x)dx
&\ll T^{\frac 34+\frac{9k}{8}}\sum_{\boldsymbol{i}\in {\Bbb I}^{k-1} }
\sum_{y<n_1\leq T}\sum_{\stackrel{n_j\leq y,2\leq j\leq k}{\alpha(\boldsymbol{n};\boldsymbol{i})\not=0}}
\frac{c_{n_1}c_{n_2}\cdots c_{n_k} }{(n_1\cdots n_k)^{7/8}|\alpha(\boldsymbol{n};\boldsymbol{i})|}\\
&\ll T^{\frac 34+\frac{9k}{8}}\sum_{\boldsymbol{i}\in {\Bbb I}^{k-1} }
\sum_{y<n_1\leq k^4y}\sum_{\stackrel{n_j\leq y,2\leq j\leq k}
{\alpha(\boldsymbol{n};\boldsymbol{i})\not=0}} \frac{c_{n_1}c_{n_2}\cdots
c_{n_k} }{(n_1\cdots n_k)^{7/8}|\alpha(\boldsymbol{n};\boldsymbol{i})|}\nonumber\\
& \ \ \ +T^{\frac 34+\frac{9k}{8}}\sum_{\boldsymbol{i}\in {\Bbb I}^{k-1} } \sum_{n_1>k^4y}
\sum_{\stackrel{n_j\leq y,2\leq j\leq k}{\alpha(\boldsymbol{n};\boldsymbol{i})\not=0}}
\frac{c_{n_1}c_{n_2}\cdots c_{n_k} }{(n_1\cdots n_k)^{7/8}n_1^{1/4}}\nonumber\\
&\ll T^{\frac 34+\frac{9k}{8}}y^{b(k)}.\nonumber
\end{align}

From (4.13)--(4.15) we get
\begin{equation}
\int_T^{2T}{\cal R}_1^{k-1}{\cal R}_2dx\ll
T^{1+\frac{9k}{8}+\varepsilon}y^{-3/4}+T^{\frac
34+\frac{9k}{8}}y^{b(k)}+T^{1+\frac{9k}{8}-\frac 18+\varepsilon} .
\end{equation}

\subsection{\bf Proof of Theorem 2.}
Suppose $3\leq k< A_0$ and  $T^\varepsilon\leq y\leq T^{1/4b(K_0)}.$
By the elementary formula $(a+b)^k=a^k+ka^{k-1}b+O(|a^{k-2}b^2|+|b|^k),$ we get
\begin{align}
\int_T^{2T}\Delta_1^k(x;\varphi)dx
&=\int_T^{2T}{\cal R}_1^kdx+k\int_T^{2T}{\cal R}_1^{k-1}{\cal R}_2dx \\
& \quad +O\left(\int_T^{2T}|{\cal R}_1^{k-2}{\cal R}_2^2|dx\right)
        +O\left(\int_T^{2T}|{\cal R}_2|^kdx\right).\nonumber
\end{align}

By (4.9), Lemma 4.2 and H\"older's inequality we get
\begin{align}
&\int_T^{2T}|{\cal R}_1^{k-2}{\cal R}_2^2|dx \\
& \ll \left(\int_T^{2T}|{\cal R}_1|^{A_0}dx\right)^{\frac{k-2}{A_0}}
      \left(\int_T^{2T}|{\cal R}_2|^{\frac{2A_0}{A_0-k+2}}dx\right)^{\frac{A_0-k+2}{A_0}} \nonumber \\
& \ll T^{1+ 9k/8+\varepsilon}y^{-\frac{3(A_0-k)}{4(A_0-2)}}\nonumber.
\end{align}

Now take $y=T^{1/4b(K_0)}.$ From (4.16)-(4.18), Lemma 4.1 and Lemma
4.2 with $A=k$ we get
\begin{align}
\int_T^{2T}\Delta_1^k(x;\varphi)dx&=\frac{B_k(c)}{(\sqrt 2\pi)^k2^{k-1}}\int_T^{2T}x^{9k/8}d x
+O(T^{1+9k/8-\sigma(k,A_0)/4b(K_0)+\varepsilon}) \\
&=\frac{B_k(c)}{(\sqrt 2\pi)^k2^{k-1}}\int_T^{2T}x^{9k/8}dx
+O(T^{1+9k/8-\delta_1(k,A_0)+\varepsilon}). \nonumber
\end{align}
 Theorem 2 follows from (4.19) immediately.

\subsection{\bf Proof of Theorem 3}
Suppose $T^\varepsilon\leq y\leq T^{1/3}.$ Using the arguments of
Theorem 4 directly to ${\cal R}_1$ we get
\begin{equation}
 \int_T^{2T}|{\cal R}_1|^{A_0}dx\ll T^{1+9A_0/8+\varepsilon}
\end{equation}
 holds with $A_0=16/3, T^\varepsilon\leq y\leq T^{1/3}.$
 Thus
\begin{equation}
 \int_T^{2T}|{\cal R}_2|^{A_0}dx\ll T^{1+9A_0/8+\varepsilon}
\end{equation}
 holds with $A_0=16/3, T^\varepsilon\leq y\leq T^{1/3}.$

By the same argument as in last subsection, we get that for $T^\varepsilon\leq y\leq T^{1/3}$,
\begin{equation}
\int_T^{2T}\Delta_1^k(x;\varphi)dx=\int_T^{2T}{\cal R}_1^kdx
+O(T^{1+9k/8+\varepsilon}y^{-\sigma(k,16/3)}+T^{\frac{3}{4}+\frac{9k}{8}}y^{b(k)}).
\end{equation}
Take $y=T^{1/(4b(k)+4\sigma(k,16/3))}.$
From Lemma 4.1 again we get
\begin{align}
\int_T^{2T}\Delta_1^k(x;\varphi)dx
&=\frac{B_k(c)}{(2\pi)^k2^{k-1}}\int_T^{2T}x^{9k/8}d x
  +O(T^{1+9k/8-\frac{\sigma(k,16/3)}{4b(k)+4\sigma(k,16/3)}+\varepsilon})\\
&=\frac{B_k(c)}{(2\pi)^k2^{k-1}}\int_T^{2T}x^{9k/8}dx+
  O(T^{1+9k/8-\delta_2(k,16/3)+\varepsilon}).\nonumber
\end{align}
Now Theorem 3 follows from (4.23).


\noindent
Yoshio Tanigawa\\
Graduate School of Mathematics, \\Nagoya University, Chikusa-ku\\
Nagoya 464-8602, Japan\\
E-mail: tanigawa@math.nagoya-u.ac.jp\\

\noindent
Wenguang Zhai and Deyu Zhang\\
School of Mathematical Sciences,\\
Shandong Normal University,\\
Jian, 250014, Shandong, China\\
E-mail: zhaiwg@hotmail.com,  zdy-78@yahoo.com.cn\\

\end{document}